\DeclareMathOperator{\Gr}{Gr}
\DeclareMathOperator{\LL}{\mathrm{L}\!}
\DeclareMathOperator{\RR}{\mathrm{R}}
\DeclareMathOperator{\Pic}{\mathrm{Pic}}
\DeclareMathOperator{\coker}{\mathrm{coker}}
\DeclareMathOperator{\Proj}{\mathrm{Proj}}
\DeclareMathOperator{\VB}{\mathrm{VB}}
\DeclareMathOperator{\coh}{\mathrm{coh}}
\DeclareMathOperator{\Qcoh}{\mathrm{Qcoh}}
\DeclareMathOperator{\Der}{\mathrm{D}}
\DeclareMathOperator{\codim}{\mathrm{codim}}
\DeclareMathOperator{\id}{\mathrm{id}}
\newcommand{\cF}{{\mathcal F}}
\newcommand{\cJ}{{\mathcal J}}
\newcommand{\cO}{{\mathcal O}}
\newcommand{\cS}{{\mathcal S}}
\newcommand{\bbA}{{\mathbb A}}
\newcommand{\bbZ}{{\mathbb Z}}
\newcommand{\bbP}{{\mathbb P}}
\newcommand{\Sch}{\mathcal Sch}
\newcommand{\kdim}{\dim}
\newcommand{\Kdim}[1]{\kdim(#1)}
\newcommand{\Wcoh}{\tilde{\mathrm{W}}}
\newcommand{\W}{\mathrm{W}}
\newcommand{\bord}{\partial}
\newcommand{\can}{\omega}
\newcommand{\Line}{\cO}
\newcommand{\Bl}{Bl}
\newcommand{\Exc}{E}
\newcommand{\RHom}{\mathrm{RHom}}
\newcommand{\Db}{\Der^{\mathrm{b}}}
\newcommand{\Dperf}{\Der_\mathrm{perf}}
\newcommand{\Dbc}{\Db_{\mathrm{coh}}}
\newcommand{\isoto}{\buildrel \sim\over\to}
\newcommand{\too}{\mathop{\longrightarrow}}
\newcommand{\tooo}[1]{\mathop{\vcenter{\hbox to #1em{\hrulefill}}\kern-5pt\to}}
\newcommand{\inv}{^{-1}}
\newcommand{\restr}[1]{_{|_{\scriptstyle #1}}}
\newcommand{\equalby}[2]{\stackrel{#2}{#1}}
\newcommand{\eg}{{e.g.}}
\newcommand{\ie}{i.e.\ }
\newcommand{\loccit}{{loc.\ cit.}}
\newcommand{\smallmatrice}[1]{\bigl(\begin{smallmatrix} #1 \end {smallmatrix}\bigr)}
\theoremstyle{plain}
\newtheorem{thm}{Theorem}[section]
\newtheorem*{thm*}{Theorem}
\newtheorem{mainthmsing}[thm]{Main Theorem in the non-regular case}
\newtheorem{mainthmreg}[thm]{Main Theorem}
\newtheorem{mainlem}[thm]{Main Lemma}
\newtheorem{mainlemcodimone}[thm]{Main Lemma in codimension one}
\newtheorem{lem}[thm]{Lemma}
\newtheorem{prop}[thm]{Proposition}
\theoremstyle{definition}
\newtheorem{defi}[thm]{Definition}
\newtheorem{nota}[thm]{Notation}
\newtheorem{setup}[thm]{Setup}
\newtheorem{hypo}[thm]{Hypothesis}
\newtheorem{rem}[thm]{Remark}
\newtheorem{exa}[thm]{Example}
\title[Connecting homomorphism for Witt groups]{{\bf Geometric description of the connecting homomorphism for Witt groups}}
\author[P. Balmer and B. Calm{\`e}s]{Paul Balmer and Baptiste Calm{\`e}s}
\address{Paul Balmer, Department of Mathematics, UCLA, Los Angeles, CA 90095-1555, USA}
\email{balmer@math.ucla.edu}
\urladdr{http://www.math.ucla.edu/$\sim$balmer}
\address{Baptiste Calm\`es, Laborartoire de Mathématiques de Lens, Faculté des Sciences Jean Perrin, Université d'Artois, 62307 Lens, France}
\email{calmes@math.jussieu.fr}
\urladdr{http://www.people.math.jussieu.fr/~calmes}
\subjclass{19G12, 11E81}
\thanks{Research supported by SNSF grant~PP002-112579 and NSF grant 0654397.}
\date{\today}
\begin{document}

\bibliographystyle{amsplain}

\begin{abstract}
We give a geometric setup in which the connecting homomorphism in
the localization long exact sequence for Witt groups decomposes as
the pull-back to the exceptional fiber of a suitable blow-up
followed by a push-forward.
\end{abstract}

\maketitle



\section{Introduction}

Witt groups form a very interesting cohomology theory in algebraic
geometry. (For a survey, see~\cite{Balmer05a}.) Unlike the better
known $K$-theory and Chow theory, Witt theory is not \emph{oriented}
in the sense of Levine-Morel~\cite{LevineMorel07} or
Panin~\cite{Panin04}, as already visible on the non-standard
projective bundle theorem, see Arason~\cite{Arason80} and
Walter~\cite{Walter03}. Another way of expressing this is that
push-forwards do not exist in sufficient generality for Witt groups.
This ``non-orientability'' can make computations unexpectedly
tricky. Indeed, the Witt groups of such elementary schemes as
Grassmann varieties will appear for the first time in the companion
article~\cite{BalmerCalmes08pp2}, whereas the corresponding
computations for oriented cohomologies have been achieved more than
35 years ago in~\cite{Laksov72}, using the well-known cellular
decomposition of Grassmann varieties. See also~\cite{Nenashev06} for
general statements on cellular varieties.

In oriented theories, there is a very useful computational technique,
recalled in Theorem~\ref{standard_thm} below, which allows inductive
computations for families of cellular varieties. Our paper
originates in an attempt to extend this result to the non-oriented
setting of Witt theory. Roughly speaking, such an extension is
possible ``half of the time''. In the remaining ``half'', some specific ideas must come in and reflect the truly
non-oriented behavior of Witt groups. To explain this rough statement, let us fix
the setup, which will remain valid for the entire paper.

\begin{setup}\label{setup}%
We denote by $\Sch$ the category of separated connected noetherian $\bbZ[\frac{1}{2}]$-scheme.
Let $X,Z\in \Sch$ be schemes and let $\iota:Z\hookrightarrow X$ be a
regular closed immersion of codimension~$c\geq 2$. Let $\Bl=\Bl_ZX$
be the blow-up of $X$ along~$Z$ and $\Exc$ the exceptional fiber.
Let $U=X- Z\cong \Bl-\Exc$ be the unaltered open complement. We have
a commutative diagram
\begin{equation}
\label{blowup_eq}%
\vcenter{\xymatrix{ \ Z\  \ar@{^(->}[r]^-{\displaystyle \iota}
& X
& \ U\ \ar@{_(->}[l]_-{\displaystyle \upsilon}
 \ar@{^(->}[ld]^-{\displaystyle \tilde\upsilon}
\\
\ \Exc\ \ar@{^(->}[r]_-{\displaystyle \tilde \iota}
\ar[u]^-{\displaystyle \tilde \pi}
& \Bl \ar[u]^-{\displaystyle \pi}
}}
\end{equation}
with the usual morphisms.
\end{setup}

Consider now a cohomology theory with supports, say~$H^*$
\begin{equation}
\label{LES-H*_eq}%
\cdots \too^{\bord} H^*_Z(X) \too H^*(X) \too^{\upsilon^*} H^*(U) \too^{\bord}
H^{*+1}_Z(X) \too \cdots
\end{equation}
In this paper we shall focus on the case of Witt groups $H^*=\W^*$
but we take inspiration from $H^*$ being an oriented cohomology
theory. Ideally, we would like conditions for the vanishing of the
connecting homomorphism $\bord=0$ in the above localization long
exact sequence. Even better would be conditions for the restriction
$\upsilon^*$ to be split surjective. When $H^*$ is an oriented
theory, there is a well-known hypothesis under which such a
splitting actually exists, namely\,:

\begin{hypo}
\label{basic_hypo}%
Assume that there exists an auxiliary morphism $\tilde\alpha: \Bl\to Y$
\begin{equation}
\label{hypo_eq}%
\vcenter{\xymatrix{
\ Z\  \ar@{^(->}[r]^-{\iota}
& X
& \ U\ \ar@{_(->}[l]_-{\upsilon}
 \ar@{_(->}[ld]_-{\tilde\upsilon}
 \ar@{..>}[d]^-{\displaystyle\alpha}
\\
\ \Exc\ \ar@{^(->}[r]_-{\tilde \iota} \ar[u]^-{\tilde \pi}
& \Bl \ar[u]^-{\pi} \ar@{..>}[r]_-{\displaystyle\tilde\alpha}
& Y}}
\end{equation}
such that $\alpha:=\tilde\alpha\circ\tilde\upsilon:U\to Y$ is an \emph{$\bbA^{\!*}$-bundle}, \ie every point of~$Y$ has a Zariski neighborhood over which $\alpha$ is isomorphic to a trivial $\bbA^{\!r}$-bundle, for some $r\geq0$. See Ex.\,\ref{Grass_exa} for an explicit example with $X$, $Y$ and $Z$ being Grassmann varieties.
\end{hypo}

\begin{thm}[The oriented technique]
\label{standard_thm}%
Under Setup~\ref{setup} and Hypothesis~\ref{basic_hypo}, assume $X$,
$Y$ and $Z$ regular. Assume the cohomology theory $H^*$ is
homotopy invariant for regular schemes and \emph{oriented}, in that
it admits push-forwards along proper morphisms satisfying flat
base-change. Then, the restriction $\upsilon^*:H^*(X)\to H^*(U)$ is
split surjective with explicit section
$\pi_*\circ\tilde\alpha^*\circ(\alpha^*)\inv$, where
$\pi_*:H^*(\Bl)\to H^*(X)$ is the push-forward. Hence the connecting
homomorphism $\bord:H^*(U)\to H^{*+1}_Z(X)$ vanishes and the above
localization long exact sequence~\eqref{LES-H*_eq} reduces to split
short exact sequences $0\to H^*_Z(X) \to H^*(X) \to H^*(U) \to0$.
\end{thm}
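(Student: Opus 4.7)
The plan is to verify directly that the proposed map $s := \pi_*\circ\tilde\alpha^*\circ(\alpha^*)\inv$ is a right inverse to~$\upsilon^*$. Once we have $\upsilon^*\circ s = \id_{H^*(U)}$, exactness of~\eqref{LES-H*_eq} gives $\bord\circ\upsilon^* = 0$, whence $\bord = \bord\circ\upsilon^*\circ s = 0$, and the sequence breaks into split short exact sequences as claimed. So the task reduces to two ingredients: (i)~that $\alpha^*\colon H^*(Y)\to H^*(U)$ is an isomorphism, which is needed even to form~$s$, and (ii)~the flat base-change identity $\upsilon^*\circ\pi_* = \tilde\upsilon^*$.

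For (i), Hypothesis~\ref{basic_hypo} furnishes a Zariski cover $\{V_i\}$ of~$Y$ on which $\alpha$ trivializes as a projection $\bbA^{\!r_i}\times V_i \to V_i$. Over each~$V_i$, homotopy invariance (applied to the regular schemes $V_i$ and $\bbA^{\!r_i}\times V_i$) shows that $\alpha^*$ is an isomorphism, and a standard Mayer--Vietoris argument glues these local isomorphisms into a global one.

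For (ii), $\pi$ is an isomorphism away from~$Z$ and $\tilde\upsilon$ identifies $U$ with $\pi\inv(U)\subset\Bl$, so the relation $\pi\circ\tilde\upsilon=\upsilon$ from~\eqref{blowup_eq} fits into a Cartesian square whose bottom arrow~$\upsilon$ is flat (being an open immersion). The flat base-change axiom then supplies $\upsilon^*\circ\pi_* = \tilde\upsilon^*$. Combining (i), (ii) and contravariant functoriality of pullbacks one computes
\[
\upsilon^*\circ s \;=\; \upsilon^*\circ\pi_*\circ\tilde\alpha^*\circ(\alpha^*)\inv \;=\; \tilde\upsilon^*\circ\tilde\alpha^*\circ(\alpha^*)\inv \;=\; (\tilde\alpha\circ\tilde\upsilon)^*\circ(\alpha^*)\inv \;=\; \alpha^*\circ(\alpha^*)\inv \;=\; \id.
\]
The main obstacle is~(i): gluing the local homotopy-invariance isomorphisms to an isomorphism over all of~$Y$. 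This is also where the regularity assumptions on $X$, $Y$, $Z$ enter, since homotopy invariance is only assumed for regular schemes. Everything else is a formal manipulation with functoriality and flat base change.
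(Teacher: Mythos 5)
Your proposal is correct and follows essentially the same route as the paper's own proof: homotopy invariance makes $\alpha^*$ invertible, flat base-change on the Cartesian square gives $\upsilon^*\circ\pi_*=\tilde\upsilon^*$, and functoriality of pullbacks along $\alpha=\tilde\alpha\circ\tilde\upsilon$ closes the computation. The only difference is that you spell out the local-to-global (Mayer--Vietoris) step behind the isomorphism $\alpha^*$, which the paper subsumes under the phrase ``by homotopy invariance.''
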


\begin{proof}
By homotopy invariance, we have $\alpha^*:H^*(Y)\isoto
H^*(U)$. By base-change, $\upsilon^*\circ\pi_*=\tilde\upsilon^*$ and
since $\tilde\upsilon^*\circ\tilde\alpha^*=\alpha^*$, we have
$\upsilon^*\circ\pi_*\circ\tilde\alpha^*\circ(\alpha^*)\inv=\id$.
\end{proof}

The dichotomy between the cases where the above technique extends to Witt
groups and the cases where is does not, comes from the duality. To
understand this, recall that one can consider Witt groups $\W^*(X,L)$
with duality twisted by a line bundle $L$ on the scheme~$X$.
Actually only the class of the twist $L$ in $\Pic(X)/2$ really
matters since we have square-periodicity isomorphisms for all $M\in\Pic(X)$
\begin{equation}
\label{square_eq}%
\W^*(X,L)\cong \W^*(X,L\otimes M^{\otimes2})\,.
\end{equation}

Here is a condensed form of our Theorem~\ref{extend_thm} and Main Theorem~\ref{mainreg_thm} below\,:

\begin{thm}
\label{super_thm}%
Under Hypothesis~\ref{basic_hypo}, assume $X$, $Y$ and $Z$ regular. Let $L\in\Pic(X)$. Then there exists an integer $\lambda(L)\in\bbZ$ (defined by~\eqref{lambda_eq} below) such that\,:
\begin{enumerate}
\item[(A)] If $\lambda(L)\equiv c-1 \!\mod 2$ then the restriction $\upsilon^*:\W^*(X,L)\to \W^*(U,L\restr{U})$ is split surjective with a section given by the composition $\pi_*\circ\tilde\alpha^*\circ(\alpha^*)\inv$. Hence the connecting homomorphism $\W^*(U,L\restr{U})\too\limits^{\bord} \W^{*+1}_Z(X,L)$ vanishes and the localization long exact sequence reduces to split short exact sequences
$$
0\too \W^*_Z(X,L)\too \W^*(X,L)\too \W^*(U,L\restr{U})\too 0\,.
$$
\item[(B)]
If $\lambda(L)\equiv c \!\mod 2$ then the connecting homomorphism $\bord$ is equal to a composition of pull-backs and push-forwards\,: $\bord=\iota_*\circ\tilde\pi_*\circ\tilde\iota^*\circ\tilde\alpha^*\circ(\alpha^*)\inv$.
\end{enumerate}
\end{thm}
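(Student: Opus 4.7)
The overall strategy is to upgrade Theorem~\ref{standard_thm} to the Witt-theoretic setting by keeping careful track of line-bundle twists, and then to read off part~(B) from the very obstruction that powers part~(A). The essential new feature compared with oriented cohomology is that the Witt-theoretic push-forward $f_*$ along a proper morphism $f$ between regular schemes is only defined after the source twist has been identified with the pull-back of the target twist tensored with the relative canonical bundle~$\omega_f$, and moreover it shifts the degree by the relative dimension. The integer $\lambda(L)$ of~\eqref{lambda_eq} should measure, modulo~$2$, the discrepancy between $\pi^*L$ and $\tilde\alpha^*L_Y$ along the exceptional divisor $\Exc$, where $L_Y\in\Pic(Y)/2$ is the unique class with $\alpha^*L_Y=L|_U$ in $\Pic(U)/2$; this class exists and is unique because the $\bbA^{\!*}$-bundle $\alpha$ induces an isomorphism on Picard groups modulo squares.

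For part~(A), the plan is to mimic the proof of Theorem~\ref{standard_thm} step by step. Homotopy invariance of Witt groups for regular schemes supplies the isomorphism $\alpha^*\colon\W^*(Y,L_Y)\isoto\W^*(U,L|_U)$, flat base-change in the cartesian square formed by $\pi$, $\upsilon$ and $\tilde\upsilon$ gives $\upsilon^*\circ\pi_*=\tilde\upsilon^*$, and functoriality yields $\tilde\upsilon^*\circ\tilde\alpha^*=\alpha^*$. Combining these identities produces $\upsilon^*\circ\pi_*\circ\tilde\alpha^*\circ(\alpha^*)^{-1}=\id$, which supplies the desired section. The only substantive point to check is that $\pi_*\circ\tilde\alpha^*$ genuinely lands in $\W^*(X,L)$ and not in a non-trivially twisted Witt group. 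Using $\omega_\pi\cong\cO_\Bl((c-1)\Exc)$ modulo $\pi^*\Pic(X)$, this amounts to the congruence $\tilde\alpha^*L_Y\equiv\pi^*L\otimes\omega_\pi$ modulo squares on $\Bl$, which unwinds precisely to the parity condition $\lambda(L)\equiv c-1\pmod 2$.

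For part~(B), no section can be constructed along those lines; the plan is instead to compute $\bord$ by combining naturality of the localization long exact sequence along $\pi$ with a codimension-one computation on $\Bl$. Naturality provides the commutative square
\[
\xymatrix{
\W^*(\tilde U,L|_U) \ar@{=}[d] \ar[r]^-{\tilde\bord} & \W^{*+1}_\Exc(\Bl,\pi^*L\otimes\omega_\pi) \ar[d]^-{\pi_*} \\
\W^*(U,L|_U) \ar[r]^-{\bord} & \W^{*+1}_Z(X,L)
}
\]
(the vertical equality encoding $\tilde U\cong U$), and the compatibility $\pi_*\circ\tilde\iota_*=\iota_*\circ\tilde\pi_*$ on $\Exc$-supported classes allows the right vertical arrow to be rewritten in the form appearing in the target formula. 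It then suffices to show that $\tilde\bord\circ\tilde\alpha^*=\tilde\iota_*\circ\tilde\iota^*\circ\tilde\alpha^*$, a statement purely on the blow-up involving only the divisor $\tilde\iota\colon\Exc\hookrightarrow\Bl$. This is to be established separately as a Main Lemma in codimension one; intuitively, classes pulled back from $Y$ extend over all of $\Bl$, so the only obstruction to lifting them as Witt classes lies in the codimension-one residue along $\Exc$, which is exactly $\tilde\iota_*\circ\tilde\iota^*$. Precomposing with $(\alpha^*)^{-1}$ then yields the announced formula.

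The principal obstacle, and the reason cases~(A) and~(B) split into two genuinely different statements, is the bookkeeping of twists on the blow-up. Because $\Pic(\Bl)=\pi^*\Pic(X)\oplus\bbZ\cdot[\Exc]$, the class $\tilde\alpha^*L_Y$ differs from $\pi^*L$ by an integer multiple of $[\Exc]$, and the parity of this integer, corrected by the $c-1$ coming from $\omega_\pi$, is precisely what $\lambda(L)$ records. When $\lambda(L)\equiv c-1\pmod 2$, the mismatch vanishes in $\Pic(\Bl)/2$ and the oriented technique transports verbatim to the Witt setting; when $\lambda(L)\equiv c\pmod 2$, the mismatch persists and is captured exactly by the codimension-one residue $\tilde\iota_*\circ\tilde\iota^*$. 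The two substantive pieces of work will therefore be making the parity dichotomy precise through an explicit definition of $\lambda(L)$, and proving the codimension-one Main Lemma that computes $\tilde\bord$ on classes coming from $\tilde\alpha^*$.
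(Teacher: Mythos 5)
Your proposal is correct and follows essentially the same route as the paper: part (A) is the twist-aware transcription of the oriented argument (the paper's Theorem~\ref{extend_thm}), and part (B) reduces, via compatibility of $\pi_*$ with the connecting homomorphism and the identity $\pi_*\tilde\iota_*=\iota_*\tilde\pi_*$, to a codimension-one computation of $\bord\circ\tilde\upsilon^*$ on the divisor $\Exc\subset\Bl$ — exactly the paper's diagram~\eqref{final_eq} together with Main Lemma~\ref{mainCodimOne_lem}. The only differences are cosmetic: the paper proves the key codimension-one lemma (which you defer) by an explicit symmetric-cone computation with the pair $(\Line(E)^\vee,\sigma_{\!E})$, and it passes through coherent Witt groups with dualizing complexes to obtain the non-regular generalization before specializing back to the regular case.
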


This statement requires some explanations. First of all, note that
we have used push-forwards for Witt groups, along $\pi:\Bl\to X$
in~(A) and along $\tilde\pi:\Exc\to Z$ and $\iota:Z\to X$ in~(B). To
explain this, recall that the push-forward in Witt theory is only
conditionally defined. Indeed, given a proper morphism $f: X'\to X$
between (connected) regular schemes and given a line bundle
$L\in\Pic(X)$, the push-forward homomorphism does not map
$\W^*(X',f^*L)$ into $\W^*(X,L)$, as one could naively expect, but the
second author and Hornbostel~\cite{Calmes08b_pre} showed that
Grothendieck-Verdier duality yields a twist by the relative
canonical line bundle~$\can_f\in\Pic(X')$\,:
\begin{equation}
\label{push-forward_eq}%
\W^{i+\Kdim{f}}\big(\,X'\,,\,\can_f\otimes f^*L \,\big)\,\too^{f_*}\, \W^i(\,X\,,\,L\,)\,.
\end{equation}
Also note the shift by the relative dimension, $\Kdim{f}:=\kdim X'-\kdim
X$, which is not problematic, since we can always replace $i\in\bbZ$
by $i-\Kdim{f}$.

More trickily, if you are given a line bundle $M\in \Pic(X')$ and if you need a push-forward $\W^*(X',M)\to \W^{*-\Kdim{f}}(X,?)$ along~$f:X'\to X$, you first need to check that $M$ is of the form $\can_f\otimes f^*L $ for some $L\in \Pic(X)$, at least module squares. \emph{Otherwise, you simply do not know how to push-forward}. This is precisely the source of the dichotomy of Theorem~\ref{super_thm}, as explained in Proposition~\ref{dichotomy_prop} below.

\smallbreak

At the end of the day, it is only possible to transpose to Witt groups the oriented technique of Theorem~\ref{standard_thm} when the push-forward $\pi_*$ exists for Witt groups. But actually, the remarkable part of Theorem~\ref{super_thm} is Case~(B), that is our Main Theorem~\ref{mainreg_thm} below, which gives a description of the connecting homomorphism~$\bord$ when we cannot prove it zero by the oriented method. This is the part where the non-oriented behavior really appears. See more in Remark~\ref{punch_rem}. Main Theorem~\ref{mainreg_thm} is especially striking since the original definition of the connecting homomorphism given in~\cite[\S\,4]{Balmer00} does not have such a geometric flavor of pull-backs and push-forwards but rather involves abstract techniques of triangulated categories, like symmetric cones, and the like. Our new geometric description is also remarkably simple to use in applications, see~\cite{BalmerCalmes08pp2}. Here is the example in question.

\begin{exa}
\label{Grass_exa}%
Let $k$ be a field of characteristic not~2. (We describe flag
varieties over $k$ by giving their $k$-points, as is customary.) Let
$1\leq d\leq n$. Fix a codimension one subspace $k^{n-1}$ of~$k^n$.
Let $X=\Gr_d(n)$ be the Grassmann variety of $d$-dimensional subspaces $V_d\subset k^{n}$ and let $Z\subset X$ be the closed subvariety of those subspaces $V_d$ contained in~$k^{n-1}$. The open complement $U=X-Z$ consists of those $V_d\not\subset k^{n-1}$. For such $V_d\in U$, the subspace $V_d\cap k^{n-1}\subset k^{n-1}$ has dimension~$d-1$. This construction defines an $\bbA^{n-d}$-bundle $\alpha:U\to Y:=\Gr_{d-1}(n-1)$, mapping $V_d$ to $V_d\cap k^{n-1}$. This situation relates the Grassmann variety $X=\Gr_d(n)$ to the smaller ones $Z=\Gr_d(n-1)$ and $Y=\Gr_{d-1}(n-1)$. Diagram~\eqref{blowup_eq} here becomes
$$
\vcenter{\xymatrix@C=2em{
\kern-1em \Gr_d(n-1)\  \ar@{^(->}[r]^-{\iota}
& \Gr_d(n)
& \ U\ \ar@{_(->}[l]_-{\upsilon}
 \ar@{^(->}[ld]_-{\tilde\upsilon}
 \ar[d]^-{\alpha}
\\
\ \Exc\ \ar@{^(->}[r]_-{\tilde \iota} \ar[u]^-{\tilde \pi}
& \Bl \ar[u]^-{\pi} \ar[r]_-{\tilde\alpha}
& \Gr_{d-1}(n-1)\,.\kern-1em}}
$$
The blow-up~$\Bl$ is the variety of pairs of subspaces
$V_{d-1}\subset V_d$ in $k^{n}$, such that $V_{d-1}\subset k^{n-1}$.
The morphisms $\pi:\Bl\to X$ and $\tilde \alpha:\Bl\to Y$ forget
$V_{d-1}$ and $V_d$ respectively. The morphism $\tilde\upsilon$ maps
$V_d\not\subset k^{n-1}$ to the pair $(V_d\cap k^{n-1})\subset V_d$.

Applying Theorem~\ref{standard_thm} to this situation, Laksov~\cite{Laksov72}
computes the Chow groups of Grassmann varieties by induction. For
Witt groups though, there are cases where the restriction
$\W^*(X,L)\to \W^*(U,L\restr{U})$ is not surjective (see~\cite[Cor.\,6.7]{BalmerCalmes08pp2}). Nevertheless,
thank to our geometric description of the connecting homomorphism,
we have obtained a complete description of the Witt groups of
Grassmann varieties, for all shifts and all twists, to appear
in~\cite{BalmerCalmes08pp2}. In addition to the present techniques,
our computations involve other ideas, specific to Grassmann
varieties, like Schubert cells and desingularisations thereof, plus
some combinatorial bookkeeping by means of special Young diagrams.
Including all this here would misleadingly hide the
simplicity and generality of the present paper. We therefore chose
to publish the computation of the Witt groups of Grassmann varieties
separately in~\cite{BalmerCalmes08pp2}.
\end{exa}

\goodbreak

The paper is organized as follows. Section~\ref{reg_sec} is
dedicated to the detailed explanation of the above dichotomy and the
proof of the above Case~(A), see Theorem~\ref{extend_thm}. We also
explain Case~(B) in our Main Theorem~\ref{mainreg_thm} but its proof
is deferred to Section~\ref{main_sec}. The whole
Section~\ref{reg_sec} is written, as above, under the assumption
that all schemes are regular. This assumption simplifies the
statements but can be removed at the price of introducing dualizing
complexes and coherent Witt groups, which provide the natural
framework over non-regular schemes. This generalization is the
purpose of Section~\ref{sing_sec}. There, we even drop the auxiliary
Hypothesis~\ref{basic_hypo}, \ie the dotted part of
Diagram~\eqref{hypo_eq}. Indeed, our Main Lemma~\ref{main_lem} gives
a very general description of the connecting homomorphism applied to
a Witt class over~$U$, if that class comes from the blow-up~$\Bl$
via restriction~$\tilde\upsilon^*$. The proof of Main
Lemma~\ref{main_lem} occupies Section~\ref{codim1_sec}. Finally,
Hypothesis~\ref{basic_hypo} re-enters the game in
Section~\ref{main_sec}, where we prove our Main
Theorem~\ref{mainreg_thm} as a corollary of a non-regular
generalization given in Theorem~\ref{main_thm}. For the convenience
of the reader, we gathered in Appendix~\ref{app} the needed results
about Picard groups, canonical bundles and dualizing complexes,
which are sometimes difficult to find in the literature. The
conscientious reader might want to start with that appendix.

\section{The regular case}
\label{reg_sec}%

We keep notation as in Setup~\ref{setup} and we assume all schemes to be regular. This section can also be considered as an expanded introduction.

As explained after Theorem~\ref{super_thm} above, we have to decide when the push-forward along $\pi:\Bl\to X$ and along $\tilde\pi:\Exc\to Z$ exist. By~\eqref{push-forward_eq}, we need to determine the canonical line bundles $\can_\pi\in\Pic(\Bl)$ and~$\can_{\tilde\pi}\in\Pic(\Exc)$. This is classical and is recalled in Appendix~\ref{app}. First of all, Proposition~\ref{Pic_codim2_prop} gives
$$
\Pic\left(\vcenter{\xymatrix{
\ Z\
 \ar@{^(->}[r]^-{\iota}
& X
& \ U\
 \ar@{_(->}[l]_-{\upsilon}
 \ar@{_(->}[ld]^-{\tilde\upsilon}
\\
\ \Exc\
 \ar@{^(->}[r]_-{\tilde \iota}
 \ar[u]^-{\tilde \pi}
& \Bl \ar[u]^-{\pi}
}}\right)
\ \cong\
\vcenter{
\xymatrix{
\Pic(Z)
 \ar[d]_-{\smallmatrice{1\\0}}
& \Pic(X)
 \ar[l]_-{\iota^*} \ar@{=}[r] \ar[d]_-{\smallmatrice{1\\0}}
& \Pic(X)\,.\!
\\
\Pic(Z)\oplus\bbZ
& \Pic(X)\oplus\bbZ \ar[l]_-{\smallmatrice{\iota^*&0\\0&1}} \ar[ru]_-{( 1 \; 0)}
}}
$$
The $\bbZ$ summands in $\Pic(\Bl)$ and $\Pic(\Exc)$ are generated by $\Line(\Exc)=\cO_{\Bl}(-1)$ and $\Line(\Exc)\restr{E}=\cO_{\Exc}(-1)$ respectively. Then Proposition~\ref{can_prop} gives the wanted
\begin{equation}
\label{can_eq}%
\begin{array}{lcc}
\can_{\pi}=(0,c-1) & \text{in}\quad \Pic(X)\oplus\bbZ\cong\Pic(\Bl) & \text{and}
\\[.5em]
\can_{\tilde{\pi}}=(-\can_{\iota},c) & \text{in}\quad \Pic(Z)\oplus \bbZ\cong\Pic(\Exc)\,. &
\end{array}
\end{equation}

So, statistically, picking a line bundle $M\in\Pic(\Bl)$ at random, there is a 50\% chance of being able to push-forward $\W^*(\Bl,M)\to \W^*(X,L)$ along $\pi$ for some suitable line bundle $L\in\Pic(X)$. To justify this, observe that
$$
\coker\big(\xymatrix@C=1.5em{\Pic(X)\ar[r]^-{\displaystyle\pi^*}&\Pic(\Bl)}\big)\big/2
\ \cong\
\bbZ/2
$$
and tensoring by $\can_\pi$ is a bijection, so half of the elements of $\Pic(Bl)/2$ are of the form $\can_\pi \otimes \pi^*(L)$. The same probability of 50\% applies to the push forward along $\tilde\pi:\Exc\to Z$ but interestingly in complementary cases, as we summarize now.

\begin{prop}
\label{dichotomy_prop}%
With the notation of~\ref{setup}, assume $X$ and $Z$ regular. Recall that $c=\codim_X(Z)$. Let $M\in\Pic(\Bl)$. Let $L\in\Pic(X)$ and $\ell\in\bbZ$ be such that $M=(L,\ell)$ in~$\Pic(\Bl)=\Pic(X)\oplus\bbZ$, that is, $M=\pi^*L\otimes\Line(E)^{\otimes \ell}$.
\begin{enumerate}
\item[(A)]
If $\ell\equiv c-1\!\mod 2$, we can push-forward along~$\pi:\Bl\to X$, as follows\,:
$$
\W^*(\Bl,M)\cong \W^*(\Bl,\can_\pi\otimes\pi^*L)\tooo{2}^{\pi_*} \W^*(X,L)\,.
$$
\item[(B)]
If $\ell\equiv c\!\mod 2$, we can push-forward along~$\tilde\pi:\Exc\to Z$, as follows\,:
$$
\W^*(\Exc,M\restr{\Exc})\cong \W^*\big(\Exc,\can_{\tilde\pi}\otimes\tilde\pi^*(\can_\iota\otimes L\restr{Z})\big)
\tooo{2}^{\tilde\pi_*} \W^{*-c+1}(Z,\can_\iota\otimes L\restr{Z}).
$$
\end{enumerate}
In each case, the isomorphism~$\cong$ comes from square-periodicity in the twist~\eqref{square_eq} and the subsequent homomorphism is the push-forward~\eqref{push-forward_eq}.
\end{prop}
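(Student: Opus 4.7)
The plan is to reduce both parts of the proposition to a direct bookkeeping computation in the explicit decompositions $\Pic(\Bl) \cong \Pic(X) \oplus \bbZ$ and $\Pic(\Exc) \cong \Pic(Z) \oplus \bbZ$ recalled just above the statement, then invoke square-periodicity~\eqref{square_eq} followed by the push-forward formula~\eqref{push-forward_eq}. The notation $M=(L,\ell)$ unfolds to $M = \pi^*L \otimes \Line(\Exc)^{\otimes \ell}$, and the canonical bundles $\can_\pi$ and $\can_{\tilde\pi}$ are read off from~\eqref{can_eq}.

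For Case~(A), $\can_\pi \otimes \pi^*L$ corresponds to $(L, c-1)$ in $\Pic(X) \oplus \bbZ$, so it differs from $M = (L,\ell)$ by $\Line(\Exc)^{\otimes(\ell - c + 1)}$. Under the parity hypothesis $\ell \equiv c-1 \!\mod 2$ the exponent is even, hence this line bundle is a square, and square-periodicity~\eqref{square_eq} gives the isomorphism $\W^*(\Bl, M) \cong \W^*(\Bl, \can_\pi \otimes \pi^*L)$. The formula~\eqref{push-forward_eq} applied to $\pi$ (with $\Kdim{\pi}=0$) then lands in $\W^*(X,L)$ as asserted.

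For Case~(B) I first restrict $M$ to the exceptional divisor. Using $\pi \circ \tilde\iota = \iota \circ \tilde\pi$ together with $\Line(\Exc)\restr{\Exc} = \cO_\Exc(-1)$, the pullback $\tilde\iota^*$ sends $(L,\ell) \in \Pic(X) \oplus \bbZ$ to $(\iota^*L, \ell) \in \Pic(Z) \oplus \bbZ$, so $M\restr{\Exc} = (\iota^*L, \ell)$. Computing in $\Pic(Z) \oplus \bbZ$ with $\can_{\tilde\pi} = (-\can_\iota, c)$ and $\tilde\pi^*$ viewed as inclusion of the first summand yields $\can_{\tilde\pi} \otimes \tilde\pi^*\big(\can_\iota \otimes L\restr{Z}\big) = (\iota^*L, c)$, the negative sign on $\can_\iota$ conveniently cancelling against the $\tilde\pi^*\can_\iota$ factor. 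The two twists therefore differ by $\cO_\Exc(-1)^{\otimes(\ell - c)}$, a square under the hypothesis $\ell \equiv c \!\mod 2$, so square-periodicity produces the first isomorphism. Finally~\eqref{push-forward_eq} along $\tilde\pi$ gives the push-forward, with shift $\Kdim{\tilde\pi} = \kdim\Exc - \kdim Z = c - 1$ converting $\W^*(\Exc,\cdot)$ into $\W^{*-c+1}(Z,\cdot)$.

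No serious obstacle is expected: the proof is purely computational once~\eqref{can_eq} and the Picard group descriptions are in hand, and the parity hypotheses in~(A) and~(B) are precisely what is needed to absorb the respective $\Line(\Exc)$-discrepancies (or their restrictions) as squares. The only point deserving slight attention is keeping the additive Picard-group conventions straight when translating back to tensor products of line bundles, especially the sign on $\can_\iota$ inside $\can_{\tilde\pi}$.
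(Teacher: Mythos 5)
Your proposal is correct and follows essentially the same route as the paper's own proof: both reduce to checking that the two twists agree in $\Pic(\Bl)/2$ (resp.\ $\Pic(\Exc)/2$) using the canonical-bundle computation~\eqref{can_eq} and the explicit Picard decompositions, then apply square-periodicity~\eqref{square_eq} and the push-forward~\eqref{push-forward_eq} with $\Kdim{\pi}=0$ and $\Kdim{\tilde\pi}=c-1$. The bookkeeping, including the cancellation of $\can_\iota$ in Case~(B), is accurate.
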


\begin{proof}
We only have to check the congruences in $\Pic/2$. By~\eqref{can_eq}, when $\ell\equiv c-1\!\mod2$, we have $[\can_\pi\otimes\pi^*L]=[(L,\ell)]=[M]$ in $\Pic(\Bl)/2$. When $\ell\equiv c\!\mod 2$, we have $[\can_{\tilde\pi}\otimes\tilde\pi^*(\can_\iota\otimes L\restr{Z})]=[(L\restr{Z},\ell)]=[M\restr{\Exc}]$ in $\Pic(\Exc)/2$. To apply~\eqref{push-forward_eq}, note that $\Kdim{\pi}=0$ since $\pi$ is birational and $\Kdim{\tilde\pi}=c-1$ since $\Exc=\bbP_Z(C_{Z/X})$ is the projective bundle of the rank-$c$ conormal bundle $C_{Z/X}$ over~$Z$.
\end{proof}

So far, we have only used Setup~\ref{setup}. Now add Hypothesis~\ref{basic_hypo} with $Y$ regular.

\begin{rem}
\label{key_rem}%
Since Picard groups of regular schemes are homotopy invariant, the $\bbA^{\!*}$-bundle $\alpha:U\to Y$ yields an isomorphism $\alpha^*:\Pic(Y)\isoto \Pic(U)$. Let us identify $\Pic(Y)$ with $\Pic(U)$, and hence with $\Pic(X)$  as we did above since $c=\codim_X(Z)\geq2$. We also have $\Line(\Exc)\restr{U}\simeq\cO_{U}$. Putting all this together, the right-hand part of Diagram~\eqref{hypo_eq} yields the following on Picard groups\,:
$$
\Pic\left(\vcenter{\xymatrix{
X
& \ U\ \ar@{_(->}[l]_-{\upsilon} \ar[d]^-{\alpha} \ar@{_(->}[ld]_-{\tilde\upsilon}
\\
\Bl \ar[u]^-{\pi} \ar[r]_-{\tilde\alpha}
& Y
}}\right)
\ \cong\
\vcenter{\xymatrix{
\Pic(X) \ar@{=}[r] \ar[d]_-{\smallmatrice{1\\0}}
& \Pic(X) \ar@{=}[d]
\\
\Pic(X)\oplus\bbZ \ar[ru]^(.35){(1\;0)\kern-.5em}
& \Pic(X)\,.\! \ar[l]_(.4){\smallmatrice{1\\\lambda}}
}}
$$
Note that the lower right map $\Pic(X)\cong\Pic(Y)\too\limits^{\tilde\alpha^*}\Pic(\Bl)\cong\Pic(X)\oplus \bbZ$ must be of the form $\smallmatrice{1\\\lambda}$ by commutativity (\ie since $\smallmatrice{1&0}\cdot\smallmatrice{1\\\lambda}=1$) but there is no reason for its second component $\lambda:\Pic(X)\to \bbZ$ to vanish. This is indeed a key observation. In other words, we have two homomorphisms from $\Pic(X)$ to $\Pic(\Bl)$, the direct one $\pi^*$ and the circumvolant one $\tilde\alpha^*\circ(\alpha^*)\inv\circ\upsilon^*$ going via $U$ and~$Y$
\begin{equation}
\label{non_comm_eq}%
\vcenter{\xymatrix{
\Pic(X) \ar[r]^-{\upsilon^*}_{\simeq} \ar[d]_-{\pi^*} \ar@{}[rd]|-{\not=}
& \Pic(U)  \ar[d]^-{(\alpha^*)\inv}_-{\simeq}
\\
\Pic(\Bl)
& \Pic(Y) \ar[l]^-{\tilde\alpha^*}
}}
\end{equation}
and they do \emph{not} coincide in general. The difference is measured by $\lambda$, which depends on the choice of $Y$ and on the choice of $\tilde\alpha:\Bl\to Y$, in Hypothesis~\ref{basic_hypo}.

So, for every $L\in\Pic(X)$, the integer~$\lambda(L)\in\bbZ$ is defined by the equation
\begin{equation}
\label{lambda_eq}%
\tilde\alpha^*\,(\alpha^*)\inv\,\upsilon^*(L)=\pi^*(L)\otimes \Line(\Exc)^{\otimes \lambda(L)}
\end{equation}
in $\Pic(\Bl)$. Under the isomorphism $\Pic(\Bl)\cong\Pic(X)\oplus\bbZ$, the above equation can be reformulated as $\tilde\alpha^*\,(\alpha^*)\inv\,\upsilon^*(L)=\big(L,\lambda(L)\big)$.
\end{rem}

\begin{thm}[Partial analogue of Theorem~\ref{standard_thm}]
\label{extend_thm}%
With the notation of~\ref{setup}, assume Hypothesis~\ref{basic_hypo} and assume $X,Y,Z$ regular. Recall that $c=\codim_X(Z)$. Let $L\in\Pic(X)$ and consider the integer $\lambda(L)\in\bbZ$ defined in~\eqref{lambda_eq} above.

If $\lambda(L)\equiv c-1 \!\mod 2$ then the restriction $\upsilon^*:\W^*(X,L)\to \W^*(U,L\restr{U})$ is split surjective, with an explicit section given by the composition $\pi_*\circ\tilde\alpha^*\circ(\alpha^*)\inv$
$$
\xymatrix{
\kern-10em \W^*(X,L)
& \W^*(U,L\restr{U}) \ar[d]^-{(\alpha^*)\inv}_-{\simeq}
\\
\W^*(\Bl\,,\,\can_\pi\otimes\pi^*L) \ar@<5em>[u]^-{\pi_*}
\cong
\W^*\big(\Bl\,,\,\tilde\alpha^*\,(\alpha^*)\inv L\restr{U}\big)
& \W^*\big(Y\,,\,(\alpha^*)\inv\,L\restr{U}\big) \ar[l]^-{\tilde\alpha^*}
}
$$
\end{thm}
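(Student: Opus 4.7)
My plan is to mimic the proof of Theorem~\ref{standard_thm}, replacing each pull-back and push-forward by its Witt-theoretic counterpart while carefully tracking the line-bundle twists. The formal identity to be proved is
\[
\upsilon^* \circ \pi_* \circ \tilde\alpha^* \circ (\alpha^*)\inv \;=\; \tilde\upsilon^* \circ \tilde\alpha^* \circ (\alpha^*)\inv \;=\; \alpha^* \circ (\alpha^*)\inv \;=\; \id,
\]
so the real work is to verify, at each stage, that the line bundle on which each functor acts lies in the subset of $\Pic$ for which the operation is defined, and that the twists are consistent all along the composition.

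First I would invoke homotopy invariance of Witt groups for regular schemes to conclude that $\alpha^* : \W^*(Y,N) \isoto \W^*(U, \alpha^* N)$ is an isomorphism for every $N \in \Pic(Y)$. Choosing $N := (\alpha^*)\inv(L\restr{U}) \in \Pic(Y)$ under the identification $\Pic(Y) \cong \Pic(U)$ of Remark~\ref{key_rem}, the map $(\alpha^*)\inv$ sends $\W^*(U, L\restr{U})$ isomorphically onto $\W^*(Y,N)$, and applying $\tilde\alpha^*$ next lands in $\W^*(\Bl, \tilde\alpha^* N)$. By the very definition~\eqref{lambda_eq} of $\lambda(L)$ one has $\tilde\alpha^* N = (L, \lambda(L))$ in $\Pic(\Bl) \cong \Pic(X) \oplus \bbZ$. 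The crucial input is then Proposition~\ref{dichotomy_prop}(A) applied to $M := \tilde\alpha^* N$ with $\ell = \lambda(L)$: the assumption $\lambda(L) \equiv c-1 \pmod 2$ is exactly the congruence that yields, via square-periodicity~\eqref{square_eq}, an isomorphism $\W^*(\Bl, M) \cong \W^*(\Bl, \can_\pi \otimes \pi^* L)$ and so makes the push-forward $\pi_* : \W^*(\Bl, M) \to \W^*(X, L)$ legitimate in the sense of~\eqref{push-forward_eq}. Since $\kdim \pi = 0$, no degree shift occurs.

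It remains to verify the two identities $\tilde\upsilon^* \circ \tilde\alpha^* = \alpha^*$ and $\upsilon^* \circ \pi_* = \tilde\upsilon^*$. The first is mere functoriality of pull-back applied to the factorization $\alpha = \tilde\alpha \circ \tilde\upsilon$. The second is flat base-change for the Witt-theoretic push-forward: removing $Z$ and $\Exc$ from Diagram~\eqref{blowup_eq} produces a cartesian square in which $\upsilon$ is an open immersion (hence flat) and $\pi$ restricts to the identity on $U$, so flat base-change gives $\upsilon^* \pi_* = (\id_U)_* \tilde\upsilon^* = \tilde\upsilon^*$. Composing the three equalities completes the proof.

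The only genuine obstacle is the twist bookkeeping; once Remark~\ref{key_rem} (giving $\lambda$) and Proposition~\ref{dichotomy_prop} (giving the criterion for $\pi_*$ to exist) are in place, the splitting drops out formally. The whole content of the theorem is thus packaged into the congruence $\lambda(L) \equiv c-1 \pmod 2$, which is precisely the hypothesis of Case~(A) of Proposition~\ref{dichotomy_prop} that makes $\pi_*$ definable on the line bundle produced by the circumvolant pull-back $\tilde\alpha^* (\alpha^*)\inv \upsilon^*$ of~\eqref{non_comm_eq}.
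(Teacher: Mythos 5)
Your proposal is correct and follows essentially the same route as the paper's proof: identify $M:=\tilde\alpha^*(\alpha^*)\inv\upsilon^*(L)=(L,\lambda(L))$ in $\Pic(\Bl)$, invoke Proposition~\ref{dichotomy_prop}~(A) under the congruence $\lambda(L)\equiv c-1\bmod 2$ to legitimize $\pi_*$, and then check the section property via $\upsilon^*\pi_*=\tilde\upsilon^*$ (flat base-change on the cartesian square over $U$, where one also needs $(\can_\pi)\restr{U}\cong\cO_U$ to match the twists) and $\tilde\upsilon^*\tilde\alpha^*=\alpha^*$. The only detail you leave implicit is that the twists in the base-change square are compatible precisely because $\can_\pi$ restricts trivially to $U$, which the paper notes explicitly via~\eqref{can_eq}.
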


\begin{proof}
The whole point is that $\pi_*$ can be applied after $\tilde\alpha^*\circ(\alpha^*)\inv$, that is, on $\W^*\big(\Bl\,,\,\tilde\alpha^*\,(\alpha^*)\inv\,\upsilon^*(L)\big)$. This holds by Proposition~\ref{dichotomy_prop}~(A) applied to
\begin{equation}
\label{M_eq}%
M:=\tilde\alpha^*\,(\alpha^*)\inv\,\upsilon^*(L)\equalby{=}{{\eqref{lambda_eq}}}(L,\lambda(L))\in\Pic(X)\oplus\bbZ=\Pic(\Bl)\,.
\end{equation}
The assumption~$\lambda(L)\equiv c-1\!\mod2$ expresses the hypothesis of Proposition~\ref{dichotomy_prop}~(A).
Checking that we indeed have a section goes as in the oriented case, see Thm.\,\ref{standard_thm}\,:
$$
\upsilon^*\circ\pi_*\circ\tilde\alpha^*\circ(\alpha^*)\inv
=\tilde\upsilon^*\circ\tilde\alpha^*\circ(\alpha^*)\inv
=\alpha^*\circ(\alpha^*)\inv
=\id.
$$
The first equality uses base-change~\cite[Thm.~6.9]{Calmes08b_pre}
on the left-hand cartesian square\,:
$$
\xymatrix@R=2.5em{
X
& \ U\ \ar@{_(->}[l]_-{\upsilon}
\\
\Bl \ar[u]^-{\pi}
& \ U\ \ar@{_(->}[l]_-{\tilde\upsilon} \ar[u]_-{\id}
}\kern7em
\xymatrix@R=2.1em{
L & L\restr{U}
\\
\can_\pi\otimes \pi^*L & L\restr{U}
}
$$
with respect to the right-hand line bundles. Note that $(\can_\pi)\restr{U}=\cO_U$ by~\eqref{can_eq}.
\end{proof}

\begin{rem}
In the above proof, see~\eqref{M_eq}, we do not apply Proposition~\ref{dichotomy_prop} to $M$ being $\pi^*L$, as one could first expect; see Remark~\ref{key_rem}. Consequently, our condition on~$L$, namely $\lambda(L)\equiv c-1 \!\mod 2$, does not only depend on the codimension~$c$ of $Z$ in~$X$ but also involves (hidden in the definition of~$\lambda$) the particular choice of the auxiliary scheme~$Y$ and of the morphism~$\tilde\alpha:\Bl\to Y$ of Hypothesis~\ref{basic_hypo}.
\end{rem}

\begin{rem}
The legitimate question is now to decide what to do in the remaining
case, that is, when $\lambda(L)\equiv c\!\mod 2$. As announced, this
is the central goal of our paper (Thm.\,\ref{mainreg_thm} below).
So, let $L\in\Pic(X)$ be a twist such that push-forward
along~$\pi:\Bl\to X$ cannot be applied to define a section to the
restriction $\W^*(X,L)\to \W^*(U,L\restr{U})$ as above. Actually, we
can find examples of such line bundles for which this restriction is
simply not surjective (see~Ex.\,\ref{Grass_exa}). The natural
problem then becomes to compute the possibly non-zero connecting
homomorphism $\bord:\W^*(U,L\restr{U})\to \W^{*+1}_Z(X,L)$. Although
not absolutely necessary, it actually simplifies the formulation
of~Theorem~\ref{mainreg_thm} to use \emph{d\'evissage} from~\cite[\S
6]{Calmes06_pre}, \ie the fact that push-forward along a regular
closed immersion is an isomorphism
\begin{equation}
\label{devissage_eq}%
\iota_*:\W^{*-c}(Z,\can_\iota\otimes L\restr{Z})\isoto \W^*_Z(X,L)\,.
\end{equation}
Using this isomorphism, we can replace the Witt groups with supports by Witt groups of~$Z$ in the localization long exact sequence, and obtain a long exact sequence
\begin{equation}
\label{W_LES_eq}\notag \addtocounter{equation}{1}%
\kern-.4em\xymatrix@C=.6em@R=.3em{
{\cdots} \ar[r]
& \W^*(X,L) \ar[rr]^-{\upsilon^*}
&& \W^*(U,L\restr{U}) \ar[rrdd]_-{\bord} \ar[rr]^-{\bord'}
&& \W^{*+1-c}(Z,\can_\iota\otimes L\restr{Z}) \ar[rr] \ar[dd]_{\simeq}^-{\iota_*}
&& \W^{*+1}(X,L) \ar[r]
& {\cdots}
\\
{\eqref{W_LES_eq}\!}
\\&&&&& \W^{*+1}_Z(X,L) \ar[rruu]
}
\end{equation}
We now want to describe~$\bord'$ when $\lambda(L)\equiv c\!\mod2$ (otherwise $\bord'=0$ by Thm.\,\ref{extend_thm}).

By the complete dichotomy of Proposition~\ref{dichotomy_prop}, we know that when the push-forward $\pi_*:\W^*(\Bl,M)\to \W^*(X,?)$ does not exist, here for $M=\tilde\alpha^*\,(\alpha^*)\inv\,\upsilon^*(L)$ by~\eqref{M_eq}, then the following composition $\tilde\pi_*\circ\tilde\iota^*$ exists and starts from the very group where~$\pi_*$ cannot be defined and arrives in the very group where $\bord'$ itself arrives\,:
$$
\W^*(\Bl,M)\too^{\tilde\iota^*} \W^*(\Exc,M\restr{\Exc})\too\limits^{\tilde\pi_*} \W^{*-c+1}(Z,\can_\iota\otimes L\restr{Z})\,.
$$
Hence, in a moment of exaltation, if we blindly apply this observation at the precise point where the oriented technique fails for Witt groups, we see that when we cannot define a section to restriction by the formula $\pi_*\circ\big(\tilde\alpha^*\circ(\alpha^*)\inv\big)$ we can instead define a mysterious homomorphism $(\tilde\pi_*\circ \tilde\iota^*)\circ\big(\tilde\alpha^*\circ(\alpha^*)\inv\big)$.
\end{rem}

\begin{mainthmreg}
\label{mainreg_thm}%
With the notation of~\ref{setup}, assume Hypothesis~\ref{basic_hypo} and assume $X,Y,Z$ regular. Let $L\in\Pic(X)$ and recall the integer $\lambda(L)\in\bbZ$ defined by~\eqref{lambda_eq}.

If $\lambda(L)\equiv c \!\mod 2$ then
the composition $\tilde\pi_*\circ\tilde\iota^*\circ\tilde\alpha^*\circ(\alpha^*)\inv$ is equal to the connecting homomorphism~$\bord'$ of~\eqref{W_LES_eq}, that is, the following diagram commutes\,:
$$
\!\xymatrix{
\kern-10em \W^{*+1-c}(Z,\can_\iota\otimes L\restr{Z})
& \kern0.5em \W^*(U,L\restr{U}) \kern0.5em \ar[d]^-{(\alpha^*)\inv}_-{\simeq} \ar[l]_-{\bord'}
\\
\W^*\big(\Exc\,,\,\can_{\tilde\pi}\otimes\tilde\pi^*(\can_\iota\otimes L\restr{Z})\big) \ar@<5em>[u]^-{\tilde\pi_*}
\cong
\W^*\big(\Exc\,,\,\tilde\iota^*\,\tilde\alpha^*\,(\alpha^*)\inv L\restr{U}\big)
& \W^*\big(Y,(\alpha^*)\inv\,L\restr{U}\big) \ar[l]^-{\tilde\iota^*\,\tilde\alpha^*}
}
$$
\end{mainthmreg}

This statement implies Thm.\,\ref{super_thm}\,(B) since $\bord=\iota_*\,\bord'$ by~\eqref{W_LES_eq}. Its proof will be given after generalization to the non-regular setting, at the end of Section~\ref{main_sec}.

\goodbreak

\begin{rem}
\label{punch_rem}%
Let us stress the peculiar combination of Theorem~\ref{extend_thm} and Theorem~\ref{mainreg_thm}. Start with a Witt class $w_U$ over the open $U\subset X$, for the duality twisted by some $L\in\Pic(U)=\Pic(X)$, and try to extend $w_U$ to a Witt class $w_X$ over~$X$\,:
$$
\xymatrix@C=6em{
\bord'(w_U) \ar@{}[rd]|*+[F]{\scriptstyle\leftarrow{\textrm{50\%\big|50\%}}\to}
& w_X \ar@{|.>}^-{\upsilon^*}[r]
& w_U \ar@{|->}[d]^-{(\alpha^*)\inv}
\\
w_{\Exc} \ar@{|.>}[u]^-{\displaystyle \tilde\pi_*}
& w_{\Bl} \ar@{|->}[l]^-{\tilde\iota^*} \ar@{|.>}[u]_-{\displaystyle \pi_*}
& w_Y \ar@{|->}[l]^-{\tilde\alpha^*}
}
$$
Then, \emph{either} we can apply the same construction as for
oriented theories, \ie push-forward the class
$w_{\Bl}:=\tilde\alpha^*\circ(\alpha^*)\inv(w_U)$ from $\Bl$ to $X$
along~$\pi$, constructing in this way an
extension~$w_X:=\pi_*(w_{\Bl})$ as wanted, \emph{or} this last
push-forward $\pi_*$ is forbidden on $w_{\Bl}$ because of the twist,
in which case the Witt class $w_U$ might simply not belong to the
image of restriction~$\upsilon^*$. The latter means that $w_U$ might
have a non-zero boundary $\bord'(w_U)$ over~$Z$, which then deserves
to be computed. The little miracle precisely is that in order to
compute this $\bord'(w_U)$, it suffices to resume the above process
where it failed, \ie with $w_{\Bl}$\,, and, since we cannot push it
forward along~$\pi$, we can consider the bifurcation of
Proposition~\ref{dichotomy_prop} and restrict this class $w_{\Bl}$
to the exceptional fiber~$\Exc$, say
$w_{\Exc}:=\tilde\iota^*w_{\Bl}$\,, and then push it forward
along~$\tilde\pi$. Of course, this does not construct an extension
of~$w_U$ anymore, since this new class $\tilde\pi_*(w_{\Exc})$ lives
over~$Z$, not over~$X$. Indeed, there is no reason a priori for this
new class to give anything sensible at all. Our Main Theorem is that
this construction in fact gives a formula for the
boundary~$\bord'(w_U)$.

\smallbreak

\noindent\textbf{Bottom line\,:} \emph{Essentially the same geometric recipe of pull-back and push-forward either splits the restriction or constructs the connecting homomorphism. In particular, the connecting homomorphism is explicitly described in both cases.}

\end{rem}

\section{The non-regular case}
\label{sing_sec}%

In Section~\ref{reg_sec}, we restricted our attention to the regular case in order to grasp the main ideas. However, most results can be stated in the greater generality of separated and noetherian $\bbZ[\frac{1}{2}]$-schemes admitting a dualizing complex. The goal of this section is to provide the relevant background and to extend Theorem~\ref{super_thm} to this non-regular setting, see Main Lemma~\ref{main_lem}.

\begin{rem}
\label{Wcoh_rem}%
The \emph{coherent Witt groups} $\Wcoh^*(X,K_X)$ of a scheme
$X\in\Sch$ (see~\ref{setup}) are defined using the derived category
$\Dbc(X)$ of complexes of $\cO_X$-modules whose cohomology is
coherent and bounded. Since $X$ is noetherian and separated,
$\Dbc(X)$ is equivalent to its subcategory $\Db(\coh(X))$ of bounded
complexes of coherent $\cO_X$-modules; see for
instance~\cite[Prop.\,A.4]{Calmes08b_pre}. The duality is defined
using the derived functor $\RHom(-,K_X)$ where $K_X\in\Dbc(X)$ is a
\emph{dualizing complex} (see~\cite[\S\,3]{Neeman08_pre}
or~\cite[\S\,2]{Calmes08b_pre}), meaning that the functor
$\RHom(-,K_X)$ defines a duality on~$\Dbc(X)$. For example, a scheme
is Gorenstein if and only if $\cO_X$ itself is an injectively
bounded dualizing complex and, in that case, all other dualizing
complexes are shifted line bundles (see Lemma~\ref{dualizing_lem}).
Regular schemes are Gorenstein, and for them, coherent Witt groups
coincide with the usual ``locally free'' Witt groups $\W^*(X,L)$
(\ie the ones defined using bounded complexes of locally free
sheaves instead of coherent ones). For any line bundle~$L$, we still
have a square-periodicity isomorphism
\begin{equation}
\label{re-square_eq}%
\Wcoh(X,K_X)\cong \Wcoh(X,K_X \otimes L^{\otimes 2})
\end{equation}
given by the multiplication by the class in $\W^0(X,L^{\otimes2})$ of the canonical form $L \to L^\vee \otimes L^{\otimes 2}$, using the pairing between locally free and coherent Witt groups.

For any closed embedding $Z \hookrightarrow X$ with open complement
$\upsilon:U\hookrightarrow X$, the restriction $K_U:=\upsilon^* K_X$
is a dualizing complex~\cite[Thm.~3.12]{Neeman08_pre} and the
general triangulated framework of~\cite{Balmer00} gives a
localization long exact sequence
\begin{equation} \label{LESLocCoh_eq}
\cdots \too^{\bord} \Wcoh^*_Z(X,K_X) \too \Wcoh^*(X,K_X) \too \Wcoh^*(U,K_U) \too^{\bord} \Wcoh^{*+1}_Z(X,K_X) \too \cdots\kern-.75em
\end{equation}
As for $K$-theory, no such sequence holds in general for singular schemes and locally free Witt groups.
\end{rem}

\begin{rem}
\label{push_rem}%
For coherent Witt groups, the push-forward along a proper morphism
$f:X' \to X$ takes the following very round form\,: If $K_X$ is a
dualizing complex on~$X$ then $f^!K_X$ is a dualizing complex
on~$X'$ (\cite[Prop.\,3.9]{Calmes08b_pre}) and the functor $\RR
f_*:\Dbc(X')\to \Dbc(X)$ induces a \emph{push-forward}
(\cite[Thm.\,4.4]{Calmes08b_pre})
\begin{equation}
\label{push_eq}%
f_*:\Wcoh^i(X',f^! K_X) \to \Wcoh^i(X,K_X)\,.
\end{equation}
Recall that $f^!:\Der_{\Qcoh}(X)\to \Der_{\Qcoh}(X')$ is the right
adjoint of~$\RR f_*$\,. If we twist the chosen dualizing complex
$K_X$ by a line bundle $L\in\Pic(X)$, this is transported to $X'$
via the following formula (see~\cite[Thm.\,3.7]{Calmes08b_pre})
\begin{equation}
\label{KL_eq}%
f^!(K_X \otimes L) \simeq f^!(K_X)\otimes f^* L \,.
\end{equation}
In the regular case, push-forward maps are also described in
Nenashev~\cite{Nenashev09}.
\end{rem}

\begin{rem}
\label{Tor_rem}%
Let us also recall from~\cite[Thm.\,4.1]{Calmes08b_pre} that the
pull-back
$$
f^*:\Wcoh^i(X, K_X) \to \Wcoh^i(X',\LL f^* K_X)
$$
along a finite Tor-dimension morphism $f:X'\to X$ is defined if $\LL
f^* (K_X)$ is a dualizing complex (this is not automatically true).
Together with the push-forward, this pull-back satisfies the usual
flat base-change formula (see~\cite[Thm.\,5.5]{Calmes08b_pre}).

A regular immersion $f:X'\hookrightarrow X$ has finite Tor-dimension since it is even perfect (see~\cite[p.\,250]{SGA6}). Moreover, in that case, $\LL f^*$ is the same as $f^!$ up to a twist and a shift (see Proposition~\ref{f^!_prop}), hence it preserves dualizing complexes.
\end{rem}

\begin{prop}
\label{dichotomygeneral_prop}%
In Setup~\ref{setup}, let $K_X$ be a dualizing complex on~$X$. Let $L\in\Pic(X)$ and $\ell\in\bbZ$. Then $K=\pi^! (K_X) \otimes \pi^*L \otimes \Line(E)^{\otimes \ell}$ is a dualizing complex on~$\Bl$ and any dualizing complex has this form, for some $L\in\Pic(X)$ and $\ell\in\bbZ$. Moreover, the dichotomy of Proposition~\ref{dichotomy_prop} here becomes\,:
\begin{enumerate}
\item[(A)]
If $\ell\equiv 0\!\mod 2$, we can push-forward along~$\pi:\Bl\to X$, as follows\,:
$$
\Wcoh^*(\Bl,K)\cong \Wcoh^*\big(\Bl,\pi^!(K_X\otimes L))\tooo{2}^{\pi_*} \Wcoh^*\big(X,K_X \otimes L)\,.
$$
\item[(B)]
If $\ell\equiv 1\!\mod 2$, we can push-forward along~$\tilde\pi:\Exc\to Z$, as follows\,:
$$
\Wcoh^*(\Exc,\LL \tilde\iota^* K)\cong \Wcoh^{*+1}\big(\Exc,\tilde\pi^!\iota^!(K_X \otimes L))\big)
\tooo{2}^{\tilde\pi_*} \Wcoh^{*+1}\big(Z,\iota^!(K_X\otimes L)\big).
$$
\end{enumerate}
As before, in both cases, the first isomorphism $\cong$ comes from square-periodicity~\eqref{re-square_eq} and the second morphism is push-forward~\eqref{push_eq}.
\end{prop}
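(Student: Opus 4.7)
The plan is to combine three ingredients: the structure of dualizing complexes on the blow-up, the square-periodicity isomorphism~\eqref{re-square_eq}, and the twist formula~\eqref{KL_eq} for~$\pi^!$, and then dispatch according to the parity of~$\ell$.

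\textbf{Dualizing complexes on the blow-up.} By Remark~\ref{push_rem}, $\pi^!(K_X)$ is dualizing on the connected scheme~$\Bl$; any other dualizing complex differs from it by a shifted line bundle (cf.\ Lemma~\ref{dualizing_lem} in the appendix). Using $\Pic(\Bl) \cong \Pic(X) \oplus \bbZ\cdot\Line(\Exc)$ as recalled just before Proposition~\ref{dichotomy_prop}, every such line bundle decomposes uniquely as $\pi^*L \otimes \Line(\Exc)^{\otimes \ell}$, and shifts can be absorbed into the degree index of the Witt group. This proves the first assertion.

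\textbf{Case (A).} If $\ell \equiv 0 \pmod{2}$, then $\Line(\Exc)^{\otimes \ell}$ is a square, and square-periodicity~\eqref{re-square_eq} yields $\Wcoh^*(\Bl, K) \cong \Wcoh^*(\Bl, \pi^!(K_X) \otimes \pi^*L)$. Formula~\eqref{KL_eq} identifies $\pi^!(K_X) \otimes \pi^*L \simeq \pi^!(K_X \otimes L)$, which is exactly the source group from which the push-forward~\eqref{push_eq} along~$\pi$ is defined, landing in $\Wcoh^*(X, K_X \otimes L)$.

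\textbf{Case (B).} If $\ell \equiv 1 \pmod{2}$, the same square-periodicity reduces to $\ell = 1$, and the task is to identify $\LL\tilde\iota^* K$ with $\tilde\pi^!\iota^!(K_X \otimes L)$ up to squares and a shift~$[-1]$. Combining $\tilde\iota^!\pi^! = (\pi\tilde\iota)^! = (\iota\tilde\pi)^! = \tilde\pi^!\iota^!$, the comparison between $\LL\tilde\iota^*$ and $\tilde\iota^!$ for the codimension-$1$ regular immersion~$\tilde\iota$ (Proposition~\ref{f^!_prop}), and~\eqref{KL_eq} applied to both $\iota$ and $\tilde\pi$, one obtains
\[
\LL\tilde\iota^* K \;\simeq\; \tilde\pi^!\iota^!(K_X \otimes L) \otimes N\,[-1]
\]
for an explicit line bundle $N$ on $\Exc$ built from $\can_{\tilde\iota}$ and $\tilde\iota^*\Line(\Exc)$. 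The crucial observation is that $N$ is a \emph{square}: this follows from the identification of $\can_{\tilde\iota}$ with an appropriate tensor power of $\tilde\iota^*\Line(\Exc)$, itself a consequence of Proposition~\ref{f^!_prop} applied to the Cartier divisor $\Exc \subset \Bl$. Square-periodicity then absorbs $N$, the shift $[-1]$ becomes $+1$ on the Witt-group degree, giving $\Wcoh^*(\Exc, \LL\tilde\iota^* K) \cong \Wcoh^{*+1}(\Exc, \tilde\pi^!\iota^!(K_X \otimes L))$, and the push-forward~\eqref{push_eq} along~$\tilde\pi$ lands in $\Wcoh^{*+1}(Z, \iota^!(K_X \otimes L))$ as claimed.

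\textbf{Main obstacle.} The delicate step is Case~(B): carefully tracking the twist and shift introduced when converting $\LL\tilde\iota^*$ to $\tilde\iota^!$ and applying the base-change identity $\tilde\iota^!\pi^! = \tilde\pi^!\iota^!$, then verifying that the residual line bundle~$N$ really is a square. This reduces to the precise computation of the relative dualizing sheaf of the Cartier divisor~$\Exc \hookrightarrow \Bl$ together with the paper's sign conventions for $\Line(\Exc)$, which is exactly what the appendix is designed to supply.
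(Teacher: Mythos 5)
Your proposal is correct and follows essentially the same route as the paper: the same decomposition of dualizing complexes on $\Bl$ via Lemma~\ref{dualizing_lem} and $\Pic(\Bl)\cong\Pic(X)\oplus\bbZ$, the same use of square-periodicity and~\eqref{KL_eq} in case~(A), and in case~(B) the same chain $\tilde\pi^!\iota^!=\tilde\iota^!\pi^!$ combined with $\tilde\iota^!\cong\tilde\iota^*\Line(E)\otimes\LL\tilde\iota^*(-)[-1]$ (so that the leftover twist is $\tilde\iota^*\Line(E)^{\otimes(1-\ell)}$, a square since $\ell$ is odd). The only blemish is cosmetic: the shift relating the two complexes goes the other way, $\LL\tilde\iota^*K\simeq \tilde\pi^!\iota^!(K_X\otimes L)[1]\otimes(\text{square})$ rather than $[-1]$, which is precisely what yields the degree $*+1$ you correctly state in the conclusion.
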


\begin{proof}
The complex $K_{\Bl}:=\pi^! K_X$ is a dualizing complex on~$\Bl$ by Remark~\ref{push_rem}. By Lemma~\ref{dualizing_lem} and Proposition~\ref{Pic_codim2_prop}~(i), all dualizing complexes on $\Bl$ are of the form $K=\pi^! (K_X) \otimes \pi^*L \otimes \Line(E)^{\otimes \ell}$, for unique $L\in\Pic(X)$ and $\ell\in\bbZ$.

We only need to check the relevant parity for applying~\eqref{re-square_eq}. Case~(A) follows easily from~\eqref{KL_eq} by definition of $K$ and parity of~$\ell$. In~(B), we need to compare $\LL \tilde\iota^* K$ and $\tilde\pi^! \iota^! (K_X \otimes L)[1]$. By Proposition~\ref{can_prop}~(iv), we know that $\tilde\iota^!(-)\cong \tilde\iota^*\Line(E)[-1]\otimes \LL\tilde\iota^*(-)$. We apply this and~\eqref{KL_eq} in the second equality below, the first one using simply that $\iota\tilde\pi=\pi\tilde\iota$\,:
\begin{align*}
\tilde\pi^! \iota^!(K_X \otimes L)[1] & \cong
\tilde\iota^!\pi^!(K_X\otimes L)[1] \cong
\tilde\iota^*\Line(E)[-1]\otimes\LL\tilde\iota^*\big(\pi^!(K_X)\otimes
\pi^* L\big)[1] \cong
\\
& \cong \tilde\iota^*\Line(E)\otimes \LL\tilde\iota^*(K\otimes \Line(E)^{\otimes
-\ell}) \cong \tilde\iota^* \Line(E)^{\otimes (1-\ell)} \otimes \LL
\tilde\iota^* K.
\end{align*}
Since $1-\ell$ is even, $\tilde\iota^* \Line(E)^{\otimes (1-\ell)}$ is
a square, as desired.
\end{proof}

We now want to give the key technical result of the paper, which is an analogue of Theorem~\ref{super_thm} in the non-regular setting. The idea is to describe the connecting homomorphism on Witt classes over~$U$ which admit an extension to the blow-up~$\Bl$. The key fact is the existence of an additional twist on~$\Bl$, namely the twist by $\Line(E)$, which disappears on~$U$ (see~\ref{O(E)_def}) and hence allows Case~(B) below.

\begin{mainlem}
\label{main_lem}
In Setup~\ref{setup}, assume that $X$ has a dualizing complex~$K_X$ and let $K_U=\upsilon^*(K_X)$ and $K_{\Bl}=\pi^!(K_X)$; see Remarks~\ref{Wcoh_rem} and~\ref{push_rem}. Let $i\in\bbZ$.
\begin{enumerate}
\item[(A)] The following composition vanishes\,:
$$
\xymatrix{\Wcoh^i(\Bl,K_{\Bl}) \ar[r]^-{\tilde\upsilon^*}
& \Wcoh^i(U,K_U) \ar[r]^-{\bord} & \Wcoh^{i+1}_Z(X,K_X)\,.
}
$$
\smallbreak
\item[(B)] The following composition (well-defined since $\tilde\upsilon^*\Line(E)\simeq\cO_U$)
%
$$
\xymatrix@C=4ex{\Wcoh^{i}\big(\Bl,K_{\Bl} \otimes \Line(E)\big)
 \ar[r]^-{\tilde\upsilon^*}
& \Wcoh^i\big(U,K_U \otimes \tilde\upsilon^*\Line(E)\big) \cong \Wcoh^i(U,K_U)
 \ar[r]^-{\bord}
& \Wcoh^{i+1}_Z(X,K_X)
}
$$
coincides with the composition
$$
\xymatrix{
\Wcoh^{i}(\Bl,\Line(E) \otimes K_{\Bl})
 \ar[d]_{\tilde\iota^*}
&& \Wcoh^{i+1}_Z(X, K_X)
\\
\Wcoh^i\big(E, \LL\tilde\iota^*(\Line(E) \otimes K_{\Bl})\big)
 \ar@{}[r]|-{\cong}
& \Wcoh^{i+1}\big(E,\tilde\pi^!\iota^!K_X\big)
 \ar[r]^-{\tilde\pi_*}
& \Wcoh^{i+1}\big(Z,\iota^!K_X\big)
 \ar[u]_{\iota_*}
}$$
where the latter isomorphism $\cong$ is induced by the composition
\begin{equation}
\label{isos1_eq}%
\LL\tilde\iota^*(\Line(E) \otimes K_{\Bl}) \cong \tilde\iota^*(\Line(E)) \otimes \LL\tilde\iota^*(K_{\Bl}) \cong \tilde\iota^!K_{\Bl}[1] \cong \tilde\pi^!\iota^!K_X[1]\,.
\end{equation}
\end{enumerate}%
\end{mainlem}

The proof of this result occupies Section~\ref{codim1_sec}. Here are just a couple of comments on the statement. Let us first of all explain the announced sequence of isomorphisms~\eqref{isos1_eq}. The first one holds since $\LL\tilde\iota^*$ is a tensor functor and since $\Line(E)$ is a line bundle (hence is flat). The second one holds by Proposition~\ref{concrete_can_prop}~(v). The last one follows by definition of $K_{\Bl}$ and the fact that $\iota\,\tilde\pi=\pi\,\tilde\iota$. Finally, note that we use the pull-back $\tilde\iota^*$ on coherent Witt groups as recalled in Remark~\ref{Tor_rem}.

\section{The main argument}
\label{codim1_sec}%

Surprisingly enough for a problem involving the blow-up
$\Bl=\Bl_Z(X)$ of $X$ along~$Z$, see~\eqref{blowup_eq}, the case
where $\codim_X(Z)=1$ is also interesting, even though, of
course, in that case $\Bl=X$ and $\Exc=Z$. In fact, this case is
crucial for the proof of Main Lemma~\ref{main_lem} and this is why
we deal with it first. In the ``general'' proof where $\codim_X(Z)$
is arbitrary, we will apply the case of codimension one to
$\tilde\iota:\Exc\hookrightarrow\Bl$. Therefore, we use the
following notation to discuss codimension one.

\begin{nota}
\label{codim1_nota}%
Let $B\in \Sch$ be a scheme with a dualizing complex $K_B$ and $\tilde\iota:E\hookrightarrow B$ be a prime divisor, that is, a regular closed immersion of codimension one, of a subscheme $E \in \Sch$. Let $\Line(E)$ be the line bundle on $B$ associated to~$E$ (see Definition~\ref{O(E)_def}). Let $\tilde\upsilon:U \hookrightarrow B$ be the open immersion of the open complement
$$
\xymatrix{E\ \ar@{^(->}[r]^-{\tilde\iota} & B & \ U \ar@{_(->}[l]_-{\tilde\upsilon}}
$$
$U=B-E$ and let $K_U$ be the dualizing complex $\tilde\upsilon^* (K_B)$ on~$U$.
\end{nota}

\begin{mainlemcodimone}
\label{mainCodimOne_lem}
With Notation~\ref{codim1_nota}, let $i\in\bbZ$. Then\,:
\begin{enumerate}
\item[(A)] The composition
$$
\xymatrix{\Wcoh^i(B,K_B) \ar[r]^-{\tilde\upsilon^*}
& \Wcoh^i(U,K_U) \ar[r]^-{\bord}
& \Wcoh^{i+1}_E(B,K_B)
}$$
is zero.
\item[(B)] The composition
$$
\xymatrix@C=4.5ex{
\Wcoh^{i}\big(B,K_B \otimes \Line(E)\big) \ar[r]^-{\tilde\upsilon^*}
& \Wcoh^i\big(U,K_U \otimes \tilde\upsilon^*\Line(E)\big) \cong \Wcoh^i(U,K_U)
 \ar[r]^-{\bord}
& \Wcoh^{i+1}_E(B,K_B)
}$$
coincides with the composition
$$
\xymatrix{
\Wcoh^{i}(B,\Line(E) \otimes K_B)
 \ar[d]_{\tilde\iota^*}
&& \Wcoh^{i+1}_E(B, K_B)
\\
\Wcoh^i\big(E, \LL\tilde\iota^*(\Line(E) \otimes K_B)\big)
 \ar@{}[r]|-{\cong}
& \Wcoh^i\big(E, \tilde\iota^!K_B[1]\big)
 \ar@{}[r]|-{\cong}
& \Wcoh^{i+1}\big(E, \tilde\iota^!K_B\big)
 \ar[u]_{\tilde\iota_*}
}
$$
where the first isomorphism $\cong$ is induced by the following isomorphism
\begin{equation}
\label{isos2_eq}%
\LL\tilde\iota^*(\Line(E) \otimes K_B) \cong \tilde\iota^*(\Line(E)) \otimes \LL\tilde\iota^*(K_B) \cong \tilde\iota^!(K_B)[1].
\end{equation}
\end{enumerate}
\end{mainlemcodimone}

\begin{proof}
Case (A) is simple\,: The composition of two consecutive morphisms
in the localization long exact sequence~\eqref{LESLocCoh_eq} is
zero. Case (B) is the nontrivial one. The isomorphisms~\eqref{isos2_eq} are the same as in~\eqref{isos1_eq}.

At this stage, we upload the definition of the connecting homomorphism for Witt groups $\bord:\Wcoh^i(U,K_U)\to \Wcoh^{i+1}_E(B,K_B)$, which goes as follows\,: Take a non-degenerate symmetric space $(P,\phi)$ over~$U$ for the $i^{\textrm{th}}$-shifted duality with values in $K_U$\,; there exists a possibly degenerate symmetric pair $(Q,\psi)$ over $B$ for the same duality (with values in $K_B$) which restricts to $(P,\phi)$ over~$U$\,; compute its symmetric cone $d(Q,\psi)$, which is essentially the cone of $\psi$ equipped with a natural metabolic form; see~\cite[\S\,4]{Balmer00} or~\cite[Def.\,2.3]{Balmer05b} for instance\,; for any choice of such a pair $(Q,\psi)$, the boundary $\bord(P,\phi)\in \Wcoh^{i+1}_E(B,K_B)$ is the Witt class of $d(Q,\psi)$.

There is nothing really specific to dualizing complexes here. The above construction is a purely triangular one, as long as one uses the \emph{same} duality for the ambient scheme $B$, for the open $U\subset B$ and for the Witt group of $B$ with supports in the closed complement~$E$. The subtlety of statement~(B) is that we start with a twisted duality on the scheme $B$ which is not the duality used for~$\bord$, but which agrees with it on~$U$ by the first isomorphism~$\cong$ in statement~(B).

Now, take an element in $\Wcoh^i(B,\Line(E) \otimes K_B)$. It is the Witt-equivalence class of a symmetric space $(P,\phi)$ over $B$ with respect to the $i^{\textrm{th}}$-shifted duality with values in~$\Line(E)\otimes K_B$.
The claim of the statement is that, modulo the above identifications of dualizing complexes, we should have
\begin{equation}
\label{claim_eq}%
\bord(\tilde\upsilon^*(P,\phi))=\tilde\iota_*(\tilde\iota^*(P,\phi))
\end{equation}
in $\Wcoh^{i+1}_E(B,K_B)$. By the above discussion, in order to compute $\bord(\tilde\upsilon^*(P,\phi))$, we need to find a symmetric pair $(Q,\psi)$ over~$B$, for the duality given by $K_B$, and such that $\tilde\upsilon^*(Q,\psi)=\tilde\upsilon^*(P,\phi)$. Note that we cannot take for $(Q,\psi)$ the pair $(P,\phi)$ itself because $(P,\phi)$ is symmetric for the twisted duality $\Line(E)\otimes K_B$ on~$B$. Nevertheless, it is easy to ``correct'' $(P,\phi)$ as follows.

As in Definition~\ref{O(E)_def}, we have a canonical homomorphism of line bundles\,:
$$
\sigma_{\!E}:\Line(E)^\vee\to \cO_{B}\,.
$$
The pair $(\Line(E)^\vee,\sigma_{\!E})$ is symmetric in the derived category $\Db(\VB(B))$ of vector bundles over~$B$, with respect to the $0^{\textrm{th}}$-shifted duality twisted by $\Line(E)^\vee$, because the target of $\sigma_{\!E}$ is the dual of its source\,: $(\Line(E)^\vee)^\vee[0]\otimes\Line(E)^\vee\cong\cO_{B}$.

Let us define the wanted symmetric pair $(Q,\psi)$ in~$\Dbc(B)$ for the $i^{\textrm{th}}$-shifted duality with values in $K_B$ as the following product\,:
$$
(Q,\psi):=(\Line(E)^\vee,\sigma_{\!E})\otimes(P,\phi)\,.
$$
Note that we tensor a complex of vector bundles with a coherent one to get a coherent one, following the formalism of~\cite[\S\,4]{Balmer05b} where such external products are denoted by~$\star$. We claim that the restriction of $(Q,\psi)$ to~$U$ is nothing but $\tilde\upsilon^*(P,\phi)$. This is easy to check since $\Line(E)\restr{U}=\cO_{U}$ via $\sigma_{\!E}$ (see~\ref{O(E)_def}), which means $(\Line(E)^\vee,\sigma_{\!E})\restr{U}=1_{U}$. So, by the construction of the connecting homomorphism $\bord$ recalled at the beginning of the proof, we know that $\bord(\tilde\upsilon^*(P,\phi))$ can be computed as $d(Q,\psi)$. This reads\,:
$$
\bord(\tilde\upsilon^*(P,\phi))=d\big((\Line(E)^\vee,\sigma_{\!E})\otimes(P,\phi)\big).
$$
Now, we use that $(P,\phi)$ is non-degenerate and that therefore (see~\cite[Rem.\,5.4]{Balmer05b} if necessary) we can take $(P,\phi)$ out of the above symmetric cone $d(...)$, \ie
\begin{equation}
\label{proof0_eq}%
\bord(\tilde\upsilon^*(P,\phi))=d(\Line(E)^\vee,\sigma_{\!E})\otimes(P,\phi)\,.
\end{equation}

Let us compute the symmetric cone $d(\Line(E)^\vee,\sigma_{\!E})=:(C,\chi)$. Note that this only involves vector bundles. We define $C$ to be the cone of $\sigma_{\!E}$ and we equip it with a symmetric form $\chi:C\isoto C^\vee[1]\otimes\Line(E)^\vee$ for the duality used for $(\Line(E),\sigma_{\!E})$ but shifted by one, that is, for the $1^{\textrm{st}}$ shifted duality with values in~$\Line(E)^\vee$. One checks that $(C,\chi)$ is given by the following explicit formula\,:
\begin{equation}
\label{chi_eq}%
\vcenter{\xymatrix@C=1.5em{ C=\ar[d]_-{\chi}\kern-1em
&
{\big(}{\cdots}\ar[r]
& 0 \ar[r] \ar[d]
& 0 \ar[r] \ar[d]
& \Line(E)^\vee \ar[r]^-{\sigma_{\!E}} \ar[d]_-{-1}
& \cO_B \ar[r] \ar[d]^-{1}
& 0 \ar[r] \ar[d]
& 0 \ar[r] \ar[d]
& {\cdots}\big)
\\
C^\vee[1]\otimes \Line(E)^\vee=\kern-2em
&
{\big(}{\cdots}\ar[r]
& 0 \ar[r]
& 0 \ar[r]
& \Line(E)^\vee \ar[r]_-{-(\sigma_{\!E})^\vee\kern0.8em\vphantom{T^{T^T}}}
& \cO_B \ar[r]
& 0 \ar[r]
& 0 \ar[r]
& {\cdots}\big)
}}
\end{equation}
where the complexes have $\cO_B$ in degree zero. Now, observe that
the complex $C$ is a resolution of $\tilde\iota_*(\cO_E)$ over~$B$,
by Definition~\ref{O(E)_def}, that is, $C\simeq\tilde\iota_*(\cO_E)$
in the derived category of~$B$. Moreover, by
Propositions~\ref{f^!_prop} and~\ref{concrete_can_prop}~(ii), we
have
$\tilde\iota^!(\Line(E)^\vee[1])=\can_{\tilde\iota}[-1]\otimes\tilde\iota^*(\Line(E)^\vee[1])\cong\cO_E$.
Using this, one checks the conceptually obvious fact that $\chi$ is
also the push-forward along the perfect morphism $\tilde\iota$ of
the unit form on~$\cO_E$. See Remark~\ref{pushone_rem} below for
more details.
This means that we have an isometry in $\Db_E(\VB(B))$
$$
d\big(\Line(E)^\vee,\sigma_{\!E}\big)=\tilde\iota_*(1_E)
$$
of symmetric spaces with respect to the $1^{\textrm{st}}$ shifted
duality with values in~$\Line(E)^\vee$. Plugging this last equality
in~\eqref{proof0_eq}, and using the projection formula (see
Remark~\ref{pushone_rem}) we obtain
$$
\bord(\tilde\upsilon^*(P,\phi))=\tilde\iota_*(1_E)\otimes(P,\phi)
=\tilde\iota_*\big(1_E\otimes\tilde\iota^*(P,\phi)\big)
=\tilde\iota_*\big(\tilde\iota^*(P,\phi)\big)\,.
$$
This is the claimed equality~\eqref{claim_eq}.
\end{proof}

\begin{rem} \label{pushone_rem}
In the above proof, we use the ``conceptually obvious fact'' that
the push-forward of the unit form on~$\cO_E$ is indeed the $\chi$
of~\eqref{chi_eq}. This fact is obvious to the expert but we cannot
provide a direct reference for this exact statement. However, if the
reader does not want to do this lengthy verification directly, the
computation of~\cite[\S\,7.2]{Calmes08b_pre} can be applied
essentially verbatim. The main difference is that here, we are
considering a push-forward of locally free instead of coherent Witt
groups along a regular embedding. Such a push-forward is constructed
using the same tensor formalism as the proper push-forwards for
coherent Witt groups considered in \loccit\ along morphisms that are
proper, perfect and Gorenstein, which is true of a regular
embedding. In \loccit\ there is an assumption that the schemes are
Gorenstein, ensuring that the line bundles are dualizing complexes.
But here, the dualizing objects for our category of complexes of
locally free sheaves are line bundles anyway and the extra
Gorenstein assumption is irrelevant.

Moreover, the projection formula used in the above proof is
established in complete generality for non necessarily regular
schemes by the same method as in~\cite[\S\,5.7]{Calmes08b_pre} using
the pairing between the locally free derived category and the
coherent one to the coherent one. More precisely, this pairing is
just a restriction of the quasi-coherent pairing
$\Der_{\Qcoh}\times\Der_{\Qcoh}\overset{\otimes}\too\Der_{\Qcoh}$ of
\loccit\ to these subcategories. By the general tensor formalism of
\cite{Calmes09}, for any morphism $f:X \to Y$ as above, for any
object $A$ (resp. $B$) in the quasi-coherent derived category of $X$
(resp. $Y$), we obtain a projection morphism in~$\Der_{\Qcoh}(Y)$
$$
\RR f_*(A) \otimes B \too \RR f_*(A \otimes \LL f^*(B)),
$$
see~\cite[Prop.\,4.2.5]{Calmes09}. It is an isomorphism
by~\cite[Thm.\,3.7]{Calmes08b_pre}. We actually only use it for $A$
a complex of locally free sheaves and $B$ a complex with coherent
and bounded cohomology. The projection formula is implied by
\cite[Thm.\,5.5.1]{Calmes09}.
\end{rem}

\begin{proof}[Proof of Main Lemma~\ref{main_lem}]
Case (A) follows from the codimension one case and the compatibility of push-forwards with connecting homomorphisms (here along the identity of~$U$). Case (B) follows from the outer commutativity of the following diagram\,:
\begin{equation}
\label{final_eq}%
\vcenter{\xymatrix{ \Wcoh^i(U,K_U)
 \ar[r]^-{\partial} \ar@{=}[d]
& \Wcoh^{i+1}_Z(X,K_X)
& \Wcoh^{i+1}(Z,\iota^!K_X)
 \ar[l]_-{\iota_*}
\\
\Wcoh^i(U,K_U) \ar[r]^-{\partial}
& \Wcoh^{i+1}_{\Exc}(\Bl,K_{\Bl})
 \ar[u]_{\pi_*}
& \Wcoh^{i+1}(\Exc, \tilde\pi^!\iota^!K_X)
 \ar[l]_-{\tilde\iota_*} \ar[u]_-{\tilde\pi_*}
\\
\Wcoh^i(\Bl,\Line(E)\otimes K_{\Bl}) \ar[u]^{\tilde\upsilon^*} \ar[r]^-{\tilde\iota^*}
& \Wcoh^i\big(\Bl,\LL\tilde\iota^*(\Line(E)\otimes K_{\Bl})\big)
 \ar@{}[r]|-{\cong} & \Wcoh^{i+1}(E,\tilde\iota^!K_{\Bl}) \ar[u]^-{\cong} \\
}}
\end{equation}
We shall now verify the inner commutativity of this diagram.
The upper left square of~\eqref{final_eq} commutes by compatibility of push-forward with connecting homomorphisms. The upper right square of~\eqref{final_eq} simply commutes by functoriality of push-forward applied to $\iota\circ\tilde\pi=\pi\circ\tilde\iota$.
Most interestingly, the lower part of~\eqref{final_eq} commutes by Lemma~\ref{mainCodimOne_lem} applied to the codimension one
inclusion $\tilde\iota: \Exc \hookrightarrow \Bl$.
\end{proof}

\section{The Main Theorem in the non-regular case}
\label{main_sec}%

Without regularity assumptions, we have shown in Main Lemma~\ref{main_lem} how to compute the connecting homomorphism $\bord:\Wcoh^*(U,K_U)\to \Wcoh^{*+1}_Z(X,K_X)$ on those Witt classes over~$U$ which come from $\Bl=\Bl_Z(X)$ by restriction $\tilde\upsilon^*$. The whole point of adding Hypothesis~\ref{basic_hypo} is precisely to split~$\tilde\upsilon^*$, that is, to construct for each Witt class on~$U$ an extension on~$\Bl$. In the regular case, this follows from homotopy invariance of Picard groups and Witt groups. In the non-regular setting, things are a little more complicated. Let us give the statement and comment on the hypotheses afterwards (see Remark~\ref{Gor_rem}).

\begin{mainthmsing} \label{main_thm}
In Setup~\ref{setup}, assume that $X$ has a dualizing complex~$K_X$ and equip $U$ with the restricted complex $K_U=\upsilon^*(K_X)$. Assume Hypothesis~\ref{basic_hypo} and further make the following hypotheses\,:
\begin{enumerate}
\item There exists a dualizing complex $K_Y$ on $Y$ such that $\alpha^* K_Y= K_U$.
\item The $\bbA^*$-bundle $\alpha$ induces an isomorphism $\Wcoh^*(Y,K_Y)\isoto \Wcoh^*(U,K_U)$.
\item The morphism $\tilde\alpha$ is of finite Tor dimension and $\LL \tilde\alpha^*(K_Y)$ is dualizing.
\item Sequence~\eqref{SESPic_eq} is exact\,: $\bbZ \to \Pic(\Bl) \to \Pic(U)$. (See Proposition~\ref{Pic_codim1_prop}.)
\end{enumerate}
Then $\LL \tilde\alpha^*(K_Y) \simeq \pi^! K_X \otimes
\Line(E)^{\otimes n}$ for some $n\in\bbZ$, and the following holds true\,:
\begin{itemize}
\item[(A)] If $n$ can be chosen even, the composition $\pi_* \tilde\alpha^* (\alpha^*)\inv$ is a section of $\upsilon^*$.
\item[(B)] If $n$ can be chosen odd, the composition $\iota_*\tilde\pi_*\tilde\iota^*\tilde\alpha^* (\alpha^*)\inv$ coincides with the connecting homomorphism $\bord:\Wcoh^*(U,K_U)\to \Wcoh^{*+1}_E(X,K_X)$.
\end{itemize}
\end{mainthmsing}

\begin{proof}
By~(c) and Remark~\ref{push_rem} respectively, both $\LL \tilde\alpha^*(K_Y)$ and $\pi^! K_X$ are dualizing complexes on~$\Bl$. By Lemma~\ref{dualizing_lem}~(i), they differ by a shifted line bundle\,:  $\LL \tilde\alpha^*(K_Y) \simeq \pi^! K_X \otimes L[m]$ with $L\in\Pic(\Bl)$ and $m\in\bbZ$. Restricting to~$U$, we get
$$
K_U \otimes \tilde\upsilon^* L[m] \simeq \tilde\upsilon^*\pi^! K_X \otimes \tilde\upsilon^*L[m] \simeq \tilde\upsilon^*(\pi^! K_X \otimes L[m] )\simeq \tilde\upsilon^* \LL \tilde\alpha^*(K_Y) \simeq \alpha^* K_Y \simeq K_U
$$
where the first equality holds by flat base-change (\cite[Thm.\,5.5]{Calmes08b_pre}). Thus, $\tilde\upsilon^* L[m]$ is the trivial line bundle on~$U$ by Lemma~\ref{dualizing_lem}~(ii). So $m=0$ and, by~(d), $L \simeq \Line(E)^{\otimes n}$ for some $n\in\bbZ$. This gives $\LL \tilde\alpha^*(K_Y) \simeq \pi^! K_X \otimes \Line(E)^{\otimes n}$ as claimed.

We now consider coherent Witt groups. By~(c) and Remark~\ref{Tor_rem}, $\tilde\alpha$ induces a morphism $\tilde\alpha^*:\Wcoh^*(Y,K_Y)\to\Wcoh(\Bl,\LL\tilde\alpha^*K_Y)$. By Lemma~\ref{affinebundle_lem}, the flat morphism $\alpha$ induces a homomorphism $\alpha^*:\Wcoh^*(Y,K_Y)\to \Wcoh^*(U,K_U)$ which is assumed to be an isomorphism in~(b). So, we can use $(\alpha^*)\inv$. When $n$ is even, we have
$$\upsilon^* \pi_* \tilde\alpha^* (\alpha^*)\inv = \tilde\upsilon^* \tilde\alpha^* (\alpha^*)\inv = \alpha^* (\alpha^*)\inv = \id
$$
where the first equality holds by flat base-change (\cite[Thm.\,5.5]{Calmes08b_pre}). This proves~(A). On the other hand, when $n$ is odd, we have
$$
\iota_* \tilde\pi_* \tilde\iota^*\tilde\alpha^*(\alpha^*)\inv = \bord\,\tilde\upsilon^* \tilde\alpha^* (\alpha^*)\inv = \bord\,\alpha^* (\alpha^*)\inv = \bord
$$
where the first equality holds by Main Lemma~\ref{main_lem}~(B).
\end{proof}

\begin{rem}
\label{Gor_rem}%
Hypothesis (a) in Theorem~\ref{main_thm} is always true when $Y$ admits a dualizing complex and homotopy invariance holds over $Y$ for the Picard group (\eg{} $Y$ regular).
Homotopy invariance for coherent Witt groups should hold in general but only appears in the literature when $Y$ is Gorenstein, see Gille~\cite{Gille03b}. This means that Hypothesis~(b) is a mild one. Hypothesis (d) is discussed in Proposition~\ref{Pic_codim1_prop}.
\end{rem}

\begin{rem}
\label{re-lambda_rem}%
In Theorem~\ref{main_thm}, the equation $\LL \tilde\alpha^*(K_Y) \simeq \pi^! K_X \otimes \Line(E)^{\otimes n}$, for $n\in\bbZ$, should be considered as a non-regular analogue of Equation~\eqref{lambda_eq}. In Remark~\ref{key_rem}, we discussed the compatibility of the various lines bundles on the schemes~$X$, $U$, $Y$ and~$\Bl$. Here, we need to control the relationship between dualizing complexes instead and we do so by restricting to~$U$ and by using the exact sequence~\eqref{SESPic_eq}. Alternatively, one can remove Hypothesis~(d) and directly assume the relation $\LL\tilde\alpha^*(K_Y)\simeq \pi^!(K_X)\otimes\Line(E)^{\otimes n}$ for some~$n\in\bbZ$. This might hold in some particular examples even if~\eqref{SESPic_eq} is not exact.
\end{rem}
For the convenience of the reader, we include the proofs of the following facts.

\begin{lem}
If $X$ is Gorenstein, then $Z$ and $\Bl$ are Gorenstein.
If $X$ is regular, $\Bl$ is regular.
\end{lem}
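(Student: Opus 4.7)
The plan is to treat the Gorenstein and regular assertions by separate techniques, and within each to dispose of $Z$ first and then $\Bl$. The statements for $Z$ reduce immediately to the classical commutative-algebra facts that the quotient of a Gorenstein (resp.\ regular) local ring by a regular sequence is again Gorenstein (resp.\ regular), since $\iota\colon Z\hookrightarrow X$ is a regular closed immersion. So for $Z$ I would simply cite these.

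For the Gorenstein case of $\Bl$, my plan is to invoke Lemma \ref{dualizing_lem}, which characterizes Gorenstein schemes as those admitting a dualizing complex that is Zariski-locally a shifted line bundle. If $X$ is Gorenstein then $\cO_X$ itself is dualizing. Since $\pi\colon\Bl\to X$ is proper, Remark \ref{push_rem} provides a dualizing complex $\pi^{!}\cO_X$ on $\Bl$. By Equation \eqref{can_eq}, together with the fact that $\pi$ has relative dimension zero (being birational), this dualizing complex is isomorphic to $\can_\pi$, which is a line bundle. Hence $\Bl$ is Gorenstein.

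For the regular case of $\Bl$, dualizing complexes no longer suffice, and my plan is geometric. The open subscheme $\Bl\setminus E\cong X\setminus Z$ is regular by hypothesis on $X$, so it suffices to check regularity along $E$. I use two facts: first, $E\cong\bbP_Z(C_{Z/X})$ is a projective bundle over the regular base $Z$, hence $E$ is regular; second, $E$ is an effective Cartier divisor in $\Bl$ by construction of the blow-up. Applying pointwise the classical lemma---if $(R,\mathfrak{m})$ is a Noetherian local ring with a non-zero-divisor $f\in\mathfrak{m}$ such that $R/(f)$ is regular, then $R$ itself is regular---then gives regularity of $\Bl$ at every point of $E$.

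I expect no step to be individually difficult; the whole proof is a synthesis of classical material with the dualizing-complex machinery already developed in Section \ref{sing_sec} and the appendix. The only delicate point is to invoke Equation \eqref{can_eq} at the right moment to identify $\pi^{!}\cO_X$ as an honest (unshifted) line bundle, so that Lemma \ref{dualizing_lem} applies directly without a further shift argument.
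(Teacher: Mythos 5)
Your Gorenstein arguments are fine. For $\Bl$ you follow essentially the paper's route: $\pi^!\cO_X$ is a dualizing complex because $\pi$ is proper, and it is a (shifted) line bundle, namely $\can_\pi[\dim\pi]$ --- though the correct reference for that identification is Proposition~\ref{f^!_prop} (together with Remark~\ref{lci_rem}, which supplies the l.c.i.\ property of $\pi$), not Equation~\eqref{can_eq}, which merely computes \emph{which} line bundle $\can_\pi$ is. Also, the ``delicate point'' you worry about is a non-issue: a \emph{shifted} line bundle as dualizing complex already forces Gorenstein, since Lemma~\ref{dualizing_lem}~(i) lets you untwist and unshift to see that $\cO$ itself is dualizing; the paper's own treatment of $Z$ uses the shifted $\iota^!\cO_X=\can_\iota[-c]$ without further ado. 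Your alternative for $Z$ (quotient of a Gorenstein local ring by a regular sequence is Gorenstein) is correct and perfectly acceptable.

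The genuine problem is in the regularity argument. You assert that the quotient of a regular local ring by a regular sequence is again regular; this is false --- $(x^2,y)$ is a regular sequence in $k[x,y]_{(x,y)}$ and the quotient $k[x]_{(x)}/(x^2)$ is not regular. Consequently, regularity of $Z$ does \emph{not} follow from ``$X$ regular plus $\iota$ a regular closed immersion'', and your argument for $\Bl$ hinges on exactly that: you need $E=\bbP_Z(C_{Z/X})$ regular, which you get from $Z$ regular. This is not a repairable slip within the stated hypotheses: the paper's own Remark following Proposition~\ref{Pic_codim1_prop} ($X=\bbA^2$, $Z=V(x^2,y^2)$) exhibits a regular $X$ and a regular closed immersion whose blow-up is not even normal, so the regularity assertion genuinely requires $Z$ to be regular as an extra hypothesis. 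To be fair, the paper's one-line proof (a citation of \cite[Thm.\,8.1.19]{Liu02}) has the same hidden requirement, and $Z$ \emph{is} regular everywhere the lemma is applied; but since you chose to give a self-contained argument rather than cite, you must either add ``$Z$ regular'' to the hypotheses or justify it correctly --- and it cannot be justified from a regular sequence alone. Once $Z$ is assumed regular, your scheme (regularity off $E$ from $U\cong X-Z$; regularity along $E$ from the Cartier divisor $E$ being regular, via the standard fact that a Noetherian local ring $R$ with a non-zero-divisor $f\in\mathfrak m$ and $R/(f)$ regular is itself regular) is correct and is a nice elementary alternative to the citation.
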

\begin{proof}
By Prop.\,\ref{f^!_prop}, $\pi^!(\cO_X)$ is the line bundle
$\can_\pi$. Since $\pi$ is proper, $\pi^!$ preserves injectively
bounded dualizing complexes and $\can_\pi$ is dualizing and since it
is a line bundle, $\Bl$ is Gorenstein. The same proof holds for $Z$,
since $\iota^!(\cO_X)$ is $\can_\iota$ (shifted) which is also a
line bundle by Prop.\,\ref{can_prop}. For regularity,
see~\cite[Thm.\,1.19]{Liu02}.
\end{proof}

\begin{proof}[Proof of Theorem~\ref{mainreg_thm}] Note that all the
assumptions of Theorem~\ref{main_thm} are fulfilled in the regular
case, that is, in the setting of Section~\ref{reg_sec}.
Indeed, if $X$ and $Y$ are regular, $\Bl$ and $U$ are regular, and
the dualizing complexes on $X$, $Y$, $\Bl$ and $U$ are simply
shifted line bundles. The morphism $\alpha^*: \Pic(Y) \to \Pic(U)$
is then an isomorphism (homotopy invariance) and $\tilde\alpha$ is
automatically of finite Tor dimension, as any morphism to a regular
scheme. Finally, the sequence on Picard groups is exact by
Proposition~\ref{Pic_codim1_prop}.

Let $K_X=L$ be the chosen line bundle on $X$. Then set $L_U:=K_U=\upsilon^* L$ and choose $L_Y=K_Y$ to be the unique line bundle (up to isomorphism) such that $\alpha^*L_Y = L_U$. By~\eqref{lambda_eq}, we have $\tilde\alpha^* L_Y = \pi^* L\otimes \Line(E)^{\otimes\lambda(L)} = \pi^! L \otimes \Line(E)^{\otimes(\lambda(L)-c+1)}$, where the last equality holds since $\pi^! L = \Line(E)^{\otimes (c-1)} \otimes \pi^* L$ by Proposition~\ref{can_prop}~(vi). In other words, we have proved that $\tilde\alpha^* K_Y = \pi^! K_X \otimes \Line(E)^{\otimes(\lambda(L)-c+1)}$. In Theorem~\ref{main_thm}, we can then take $n=\lambda(L)-c+1$ and the parity condition becomes $\lambda(L)\equiv c-1 \!\mod 2$ for Case (A) and $\lambda(L)\equiv c \! \mod 2$ for Case (B). So, Case (A) is the trivial one and corresponds to Theorem~\ref{extend_thm}. Case (B) exactly gives Theorem~\ref{mainreg_thm} up to the identifications of line bundles explained in Appendix~\ref{app}.
\end{proof}

\begin{appendix}
\section{Line bundles and dualizing complexes}
\label{app}

We use Hartshorne~\cite{Hartshorne77} or Liu~\cite{Liu02} as general
references for algebraic geometry. We still denote by $\Sch$ the
category of noetherian separated connected schemes (we do not need
``over $\bbZ[\frac12]$'' in this appendix).

\begin{defi}
\label{O(E)_def}%
Let $\tilde\iota: E \hookrightarrow B$ be a regular closed immersion
of codimension one, with $B\in Sch$.
Consider the ideal $I_E\subset\cO_B$ defining~$E$
\begin{equation}
\label{O(E)_eq}%
\xymatrix{0 \ar[r] & I_E \ar[r]^-{\sigma_{\!E}} & \cO_B \ar[r] & \tilde\iota_*\cO_E \ar[r] & 0}.
\end{equation}
By assumption, $I_E$ is an invertible ideal, \ie a line bundle. The
\emph{line bundle associated to~$E$} is defined as its dual
$\Line(E):=(I_E)^\vee$, see~\cite[II.6.18]{Hartshorne77}. We thus
have by construction a global section $\sigma_{\!E}:\Line(E)^\vee\to
\cO_B$, which vanishes exactly on~$E$. This gives an explicit trivialization
of $\Line(E)$ outside~$E$. On the other hand, the
restriction of $\Line(E)$ to $E$ is the normal bundle
$\Line(E)\restr{E}\cong N_{E/B}$.
\end{defi}

\begin{exa}
\label{L_Exc_exa}%
Let $\Bl=\Bl_Z(X)$ be the blow-up of $X$ along a regular closed
immersion $Z\hookrightarrow X$ as in Setup~\ref{setup}. Let
$I=I_Z\subset \cO_X$ be the sheaf of ideals defining~$Z$. By
construction of the blow-up, we have $\Bl=\Proj(\cS)$ where $\cS$ is the
sheaf of graded $\cO_X$-algebras
$$
\cS:=\cO_X\oplus I\oplus I^2\oplus I^3\oplus\cdots
$$
Similarly, $\Exc=\Proj(S/\cJ)$ where $\cJ:=I\cdot\cS\subset\cS$ is the sheaf of homogeneous ideals
$$
\cJ=I\oplus I^2\oplus I^3\oplus I^4\oplus \cdots
$$
So, $\Exc=\bbP_Z(C_{Z/X})$ is a projective bundle over~$Z$ associated to the vector bundle $C_{Z/X}=I/I^2$ which is the conormal bundle of~$Z$ in~$X$.
Associating $\cO_{\Bl}$-sheaves to graded $\cS$-modules, the obvious
exact sequence $0\to \cJ\to \cS\to \cS/\cJ\to 0$ yields
\begin{equation} \label{codim1ses_eq}
\xymatrix{0 \ar[r] & {\widetilde{\cJ}}_{\vphantom{_{j}}} \ar[r]^-{\sigma_{\!\Exc}} & \cO_{\Bl} \ar[r] & \tilde\iota_*\cO_{\Exc} \ar[r] & 0}.
\end{equation}
Compare~\eqref{O(E)_eq}. This means that here
$I_{\Exc}=\widetilde{\cJ}$. But now, $\cJ$ is obviously $\cS(1)$
truncated in non-negative degrees. Since two graded $\cS$-modules
which coincide above some degree have the same associated sheaves,
we have $I_{\Exc}=\widetilde{\cJ}=\widetilde{\cS(1)}=\cO_{\Bl}(1)$.
Consequently, $\Line(\Exc)=(I_{\Exc})^\vee=\cO_{\Bl}(-1)$. In
particular, we get
\begin{equation}
\label{restr_O(E)_eq}%
\Line(\Exc)\restr{\Exc}=\cO_{\Bl}(-1)\restr{\Exc}=\cO_{\Exc}(-1).
\end{equation}
\end{exa}

\begin{prop}[Picard group in codimension one] \label{Pic_codim1_prop}
Let $B\in \Sch$ be a scheme and $\tilde\iota:E\hookrightarrow B$ be
a regular closed immersion of codimension one of an irreducible subscheme $E\in \Sch$ with open complement
$\tilde\upsilon:U\hookrightarrow B$. We then have a complex
\begin{equation} \label{SESPic_eq}
\xymatrix{ \bbZ \ar[r] & \Pic(B) \ar[r]^-{\tilde\upsilon^*} & \Pic(U)}
\end{equation}
where the first map sends $1$ to the line bundle $\Line(E)$ associated to~$E$. This complex is exact if $B$ is normal, and $\tilde\upsilon^*$ is surjective when $B$ is furthermore regular. It is also exact when $B$ is the blow-up of a normal scheme $X$ along a regular embedding.
\end{prop}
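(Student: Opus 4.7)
The plan is to treat the four assertions in turn. First, that \eqref{SESPic_eq} is a complex is immediate from Definition~\ref{O(E)_def}: the canonical section $\sigma_{\!E}:\Line(E)^\vee\to\cO_B$ vanishes scheme-theoretically exactly on~$E$, so its restriction to $U=B-E$ is a nowhere-vanishing morphism of line bundles, giving an isomorphism $\Line(E)\restr{U}\cong\cO_U$.

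For exactness when $B$ is normal, first note that $B\in\Sch$ is connected and normal, hence integral, so we have the standard injection $\Pic(B)\hookrightarrow\mathrm{Cl}(B)$ into the Weil divisor class group. Hartshorne~\cite[II.6.5]{Hartshorne77} gives an exact sequence of Weil class groups
\begin{equation*}
\bbZ\,\longrightarrow\,\mathrm{Cl}(B)\,\stackrel{\tilde\upsilon^*}{\longrightarrow}\,\mathrm{Cl}(U)\,\longrightarrow\,0
\end{equation*}
where $1\mapsto[E]=[\Line(E)]$. Given $L\in\Pic(B)$ with $L\restr{U}\cong\cO_U$, compatibility of $\Pic\to\mathrm{Cl}$ with restriction yields $[L]=n[\Line(E)]$ in $\mathrm{Cl}(B)$ for some $n\in\bbZ$, so $L\otimes\Line(E)^{\otimes(-n)}$ has trivial Weil class and is therefore trivial in $\Pic(B)$ by the above injectivity, \ie $L\cong\Line(E)^{\otimes n}$. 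When $B$ is furthermore regular it is locally factorial, so $\Pic=\mathrm{Cl}$ throughout and the displayed sequence directly yields the claimed surjectivity of $\tilde\upsilon^*$.

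The blow-up case is the least mechanical. Suppose $B=\Bl_Z(X)$ with $X$ normal and $\iota:Z\hookrightarrow X$ a regular embedding. The codimension-one case gives $\Bl=X$ and is already handled by the previous step, so assume $\codim_X(Z)\geq2$. I would then import the computation of $\Pic(\Bl)$ contained in Proposition~\ref{Pic_codim2_prop} of the appendix (and already used in Section~\ref{reg_sec}): there is a decomposition $\Pic(\Bl)\cong\Pic(X)\oplus\bbZ$ with the $\bbZ$-summand generated by $\Line(E)$ and the $\Pic(X)$-summand identified with $\pi^*\Pic(X)$. Under this decomposition, $\tilde\upsilon^*$ factors as the projection $\Pic(X)\oplus\bbZ\twoheadrightarrow\Pic(X)$ followed by $\upsilon^*:\Pic(X)\to\Pic(U)$. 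It remains to check that $\upsilon^*$ is injective, which follows from $X$ normal (so $\Pic(X)\hookrightarrow\mathrm{Cl}(X)$) together with the isomorphism $\mathrm{Cl}(X)\isoto\mathrm{Cl}(U)$ from Hartshorne~II.6.5~(a) in codimension $\geq2$. Hence $\ker(\tilde\upsilon^*)=\bbZ\cdot[\Line(E)]$, as required.

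The main obstacle is the blow-up step: it relies on Proposition~\ref{Pic_codim2_prop} (whose proof belongs elsewhere in the appendix) rather than on normality of $\Bl$ itself, which would be awkward to establish directly. Everything else reduces to standard Weil/Cartier divisor yoga.
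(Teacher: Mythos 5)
Your proof is correct and follows essentially the same route as the paper's: triviality of $\Line(E)$ on $U$ for the complex property, the injection $\Pic(B)\hookrightarrow{\rm Cl}(B)$ combined with Hartshorne's exact sequence for Weil divisor class groups in the normal case (with ${\rm Pic}={\rm Cl}$ when $B$ is regular), and, for the blow-up, reduction to $\codim_X(Z)\geq2$ followed by a diagram chase using $\Pic(\Bl)\cong\Pic(X)\oplus\bbZ$ and the injectivity of $\upsilon^*$ on $\Pic(X)$ from Proposition~\ref{Pic_codim2_prop}. You merely spell out the diagram chases that the paper leaves implicit.
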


\begin{proof}
\eqref{SESPic_eq} is a complex since $\Line(E)$ is trivial on $U$.
When $B$ is normal, $\Pic(B)$ injects in the group ${\rm Cl}(B)$ of
Weil divisors (see~\cite[7.1.19 and 7.2.14~(c)]{Liu02}), for which
the same sequence holds by~\cite[Prop.~II.6.5]{Hartshorne77}.
Exactness of~\eqref{SESPic_eq} then follows by diagram chase. The
surjectivity of $\tilde\upsilon^*$ when $B$ is regular follows from
\cite[Prop.~II.6.7 (c)]{Hartshorne77}. When $B$ is the blow-up of
$X$ along $Z$, we can assume that $\codim_X(Z)\geq2$ by the previous
point. Then, the result again follows by diagram chase, using that
$\Pic(B)=\Pic(X) \oplus \bbZ$, as proved in
Proposition~\ref{Pic_codim2_prop}~(i) below.
\end{proof}

\begin{rem}
Note that the blow-up of a normal scheme along a regular closed
embedding isn't necessarily normal if the subscheme is not reduced.
For example, take $X=\bbA^2={\rm Spec} (k[x,y])$ and $Z$ defined by
the equations $x^2=y^2=0$. Then, $\Bl$ is the subscheme of $\bbA^2
\times \bbP^1$ defined by the equations $x^2 v= y^2 u$ where $[u:v]$
are homogeneous coordinates for $\bbP^1$ and it is easy to check
that the whole exceptional fiber is
singular. Thus $\Bl$ is not normal (not even regular in codimension
one).
\end{rem}

\begin{prop}[Picard group of a projective bundle] \label{Pic_proj_bundle_prop}
Let $X\in Sch$ be a (connected) scheme and $\cF$ a vector bundle over $X$. We
consider the projective bundle $\bbP_X(\cF)$ associated to~$\cF$.
Its Picard group is $\Pic(X) \oplus \bbZ$ where $\bbZ$ is generated
by $\cO(-1)$ and $\Pic(X)$ comes from the pull-back from~$X$.
\end{prop}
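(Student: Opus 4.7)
My plan is to show that the natural map
\[
\phi \colon \Pic(X) \oplus \bbZ \longrightarrow \Pic(\bbP_X(\cF)), \qquad (L,n) \longmapsto \pi^*L \otimes \cO(n),
\]
is an isomorphism, where $\pi \colon \bbP_X(\cF) \to X$ denotes the structure morphism. We may as well assume $r := \mathrm{rank}(\cF) \geq 2$, since otherwise $\bbP_X(\cF) = X$ and the statement is degenerate (and irrelevant to the intended applications, where $\Exc = \bbP_Z(C_{Z/X})$ with $c \geq 2$).

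For injectivity, I would apply $\RR\pi_*$ to a supposed isomorphism $\pi^* L \otimes \cO(n) \simeq \cO_{\bbP_X(\cF)}$ and use the projection formula to obtain $L \otimes \RR\pi_* \cO(n) \simeq \RR\pi_* \cO_{\bbP_X(\cF)} \simeq \cO_X$. Serre's classical computation of $\RR\pi_* \cO(n)$ on a projective bundle (namely $\pi_* \cO(n) \cong \mathrm{Sym}^n(\cF^\vee)$ for $n \geq 0$ and zero for $-r < n < 0$, with the top direct image concentrated in degree $n \leq -r$) then forces $n = 0$ and $L \simeq \cO_X$.

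For surjectivity, let $M \in \Pic(\bbP_X(\cF))$. Restricting to each fiber $\bbP(\cF_x) \cong \bbP^{r-1}_{k(x)}$, whose Picard group is $\bbZ$, I obtain a unique integer $n(x)$ with $M|_{\bbP(\cF_x)} \cong \cO(n(x))$. The function $x \mapsto n(x)$ is locally constant, for instance by local constancy of the Hilbert polynomial along the flat proper morphism $\pi$; hence it is constant on the connected $X$, with common value $n$. Twisting reduces the problem to the following: a line bundle $M' := M \otimes \cO(-n)$ that is trivial on every fiber is necessarily of the form $\pi^*L$ for some $L \in \Pic(X)$.

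The natural candidate is $L := \pi_* M'$, and verifying that this works is the main technical step. Cohomology and base change, applied to the flat proper morphism $\pi$ with geometrically integral fibers on which $H^0(\cO) = k(x)$ is one-dimensional and $H^1(\cO) = 0$, shows that $\pi_* M'$ is a line bundle on $X$ whose formation commutes with base change. Consequently the adjunction unit $\pi^* L \to M'$ is a morphism of line bundles that restricts to an isomorphism on every fiber, hence is itself an isomorphism by Nakayama. The main obstacle I anticipate is confirming that cohomology and base change applies under our hypotheses on $X$ (merely noetherian, separated, connected), but because $\pi$ itself is smooth and proper with the required vanishing of higher cohomology on fibers, the standard form of the theorem suffices and no further assumption on $X$ is needed.
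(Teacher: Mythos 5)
Your proof is correct, but it proves surjectivity by a genuinely different route than the paper. The paper deduces surjectivity of $\Pic(X)\oplus\bbZ\to\Pic(\bbP_X(\cF))$ formally from Quillen's projective bundle theorem in $K$-theory, using that the determinant $K_0\to\Pic$ is surjective and computing it on each summand $[\cO(-i)]\cdot\pi^*K_0(X)$ of Quillen's decomposition; injectivity is handled, as in your argument, by restricting to a fiber (for the $\bbZ$ component) and by the projection formula (for the $\Pic(X)$ component). You instead give the classical ``seesaw'' argument: the fiberwise degree $n(x)$ is locally constant by flatness, and after twisting by $\cO(-n)$ the line bundle is trivial on all fibers, hence descends via $\pi_*$ by cohomology and base change together with $H^0(\bbP^{r-1},\cO)=k$ and $H^{i}(\bbP^{r-1},\cO)=0$ for $i\geq 1$; this is essentially EGA II~4.2.7 and is valid over an arbitrary connected noetherian base, so no gap arises from the weak hypotheses on $X$. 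The trade-off is that the paper's proof is a three-line corollary of a big theorem already in its toolkit, whereas yours is self-contained and uses only the coherent cohomology of projective space plus the base-change machinery; your explicit remark that the rank must be at least $2$ for the statement to be non-degenerate is a point the paper leaves implicit (and which is harmless in its applications, where the rank is $c\geq 2$).
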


\begin{proof}
Surjectivity of $\Pic(X)\oplus \bbZ\to \Pic(\bbP_X(\cF))$ is a
formal consequence of Quillen's formula~\cite[Prop.~4.3]{Quillen73}
for the K-theory of a projective bundle. Indeed, the determinant map
$K_0 \to \Pic$ is surjective with an obvious set theoretic section
and can easily be computed on each component of Quillen's formula.
Injectivity is obtained by pulling back to the fiber of a point for
the $\bbZ$ component, and by the projection formula for the
remaining $\Pic(X)$ component.
\end{proof}

\begin{prop}[Picard group of a blow-up] \label{Pic_codim2_prop}
Under Setup~\ref{setup}, we have\,:
\begin{enumerate}
\item[(i)]
The Picard group of $\Bl=\Bl_Z(X)$ is isomorphic to $\Pic(X)\oplus
\bbZ$ where the direct summand $\Pic(X)$ comes from the pull-back
$\pi^*$ and $\bbZ$ is generated by the class of the exceptional
divisor~$\Line(\Exc)=\cO_{\Bl}(-1)$.
\item[(ii)]
If $X$ is normal, the map $\upsilon^*:\Pic(X) \to \Pic(U)$ is injective. If $X$ is regular it is an isomorphism.
\item[(iii)]
The exceptional fiber $\Exc$ is the projective bundle $\bbP(C_{Z/X})$ over $Z$ and its Picard group is therefore $\Pic(Z)\oplus \bbZ$ where $\bbZ$ is generated by $\cO_{\Exc}(-1)$.
\item[(iv)]
The pull-back $\tilde{\iota}^*: \Pic(\Bl) \to \Pic(\Exc)$ maps
$[\Line(\Exc)] \in \Pic(\Bl)$ to $[\cO_{\Exc}(-1)]$.
\end{enumerate}
Under these identifications, Diagram~\eqref{blowup_eq} induces the following pull-back maps on
Picard groups\,:
$$
\xymatrix@C=4em{
\Pic(Z)
 \ar[d]_{\smallmatrice{\id \\ 0}}
& \Pic(X)
 \ar[l]_{\iota^*}
 \ar[r]^-{\upsilon^*}
 \ar[d]^{\smallmatrice{\id \\ 0}}
& \Pic(U)\,.\!
\\
\Pic(Z)\oplus \bbZ
& \Pic(X) \oplus \bbZ
 \ar[l]_{\smallmatrice{\iota^* & 0 \\ 0 & \id }}
 \ar[ru]_-{(\upsilon^* \; 0)}
}
$$
\end{prop}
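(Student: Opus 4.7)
The plan is to dispatch parts~(iii) and~(iv) first (they are essentially free from the projective-bundle material), then to establish~(ii) by a Weil-divisor argument, and finally to derive~(i) by combining~(iii) with Proposition~\ref{Pic_codim1_prop} applied to $\tilde\iota:\Exc\hookrightarrow\Bl$. The commutativity of the final diagram of Picard groups will then be a formal check.

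For (iii), Example~\ref{L_Exc_exa} already identifies $\Exc=\bbP_Z(C_{Z/X})$ as a projective bundle over~$Z$, and Proposition~\ref{Pic_proj_bundle_prop} computes $\Pic(\Exc)\cong\Pic(Z)\oplus\bbZ$, with $\bbZ$ generated by $\cO_{\Exc}(-1)$. For~(iv), Example~\ref{L_Exc_exa} also shows $\Line(\Exc)=\cO_{\Bl}(-1)$ (see equation~\eqref{restr_O(E)_eq}), so the pull-back $\tilde\iota^*[\Line(\Exc)]=[\cO_{\Exc}(-1)]$ is immediate. For (ii), I would use the standard factorization $\Pic(X)\hookrightarrow{\rm Cl}(X)\to{\rm Cl}(U)$, which is valid whenever $X$ is normal (Liu~7.2.14(c)). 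Since $c=\codim_X(Z)\geq 2$, no nonzero Weil divisor on~$X$ is supported in~$Z$, so ${\rm Cl}(X)\to{\rm Cl}(U)$ is injective, whence so is $\upsilon^*:\Pic(X)\to\Pic(U)$. When $X$ is regular, $U$ is regular as well, and Hartshorne~II.6.5 combined with the codimension bound upgrades this to an isomorphism.

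For (i), I would apply Proposition~\ref{Pic_codim1_prop} to the codimension-one regular immersion $\tilde\iota:\Exc\hookrightarrow\Bl$ with open complement $\tilde\upsilon:U\hookrightarrow\Bl$, yielding the exact complex
\[
\bbZ\;\xrightarrow{\;1\,\mapsto\,[\Line(\Exc)]\;}\;\Pic(\Bl)\;\xrightarrow{\;\tilde\upsilon^*\;}\;\Pic(U)\,,
\]
where exactness holds because $\Bl$ is the blow-up of a normal scheme along a regular embedding (the last case of Proposition~\ref{Pic_codim1_prop}). Now $\tilde\upsilon^*\circ\pi^*=\upsilon^*$ is injective by~(ii), so the composite $\Pic(X)\oplus\bbZ\to\Pic(\Bl)$ defined by $(L,\ell)\mapsto\pi^*L\otimes\Line(\Exc)^{\otimes\ell}$ is already injective (the two summands have disjoint images in $\Pic(U)$, since $\Line(\Exc)\restr{U}\cong\cO_U$). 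For surjectivity, given $M\in\Pic(\Bl)$, its restriction $\tilde\upsilon^*M\in\Pic(U)$ lifts to some $L\in\Pic(X)$ via the surjectivity half of (ii); then $M\otimes\pi^*(L)^\vee$ is trivial on~$U$, so by exactness of the complex above it equals $\Line(\Exc)^{\otimes\ell}$ for some $\ell\in\bbZ$. Finally, commutativity of the displayed diagram of Picard groups follows from functoriality of pull-back together with the identifications $\tilde\pi^*\iota^*=\tilde\iota^*\pi^*$ and the fact, established in~(iv), that $\tilde\iota^*\Line(\Exc)=\cO_{\Exc}(-1)$ and $\tilde\upsilon^*\Line(\Exc)=\cO_U$.

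The main obstacle is the bookkeeping around hypotheses in~(i): the exactness of the codimension-one Picard sequence and the existence of a lift $L\in\Pic(X)$ of $\tilde\upsilon^*M\in\Pic(U)$ both implicitly rely on enough regularity/normality of~$X$ to make the classical divisor-class arguments go through. The delicate step is thus to make precise that the proof of~(i) uses (ii) in both its injective and surjective forms, so that any standing hypotheses under which Proposition~\ref{Pic_codim2_prop} is applied in the body of the paper (notably regularity of~$X$, which is in force in Section~\ref{reg_sec}) suffice to carry the argument through without circularity with Proposition~\ref{Pic_codim1_prop}.
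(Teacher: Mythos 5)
Your handling of (ii), (iii) and (iv) coincides with the paper's (Weil divisors for (ii); Example~\ref{L_Exc_exa} and Proposition~\ref{Pic_proj_bundle_prop} for (iii) and (iv)). The proof you propose for (i), however, has a genuine gap, and in fact two related ones. First, it is circular: the case of Proposition~\ref{Pic_codim1_prop} you invoke --- exactness of $\bbZ\to\Pic(\Bl)\to\Pic(U)$ when $\Bl$ is the blow-up of a normal scheme along a regular embedding --- is proved in the paper by a diagram chase \emph{using} Proposition~\ref{Pic_codim2_prop}~(i). Nor can you fall back on the ``$B$ normal'' case of that proposition, because the blow-up of a normal scheme along a regular closed immersion need not be normal: the paper's own remark gives $X=\bbA^2$, $Z=\{x^2=y^2=0\}$, whose blow-up is not even regular in codimension one. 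Second, your argument needs $\upsilon^*:\Pic(X)\to\Pic(U)$ to be injective \emph{and} surjective, i.e.\ essentially $X$ regular, whereas (i) is stated under Setup~\ref{setup} alone and is used in that generality in Section~\ref{sing_sec} (Proposition~\ref{dichotomygeneral_prop} relies on it to classify dualizing complexes on $\Bl$ for an $X$ that merely admits a dualizing complex). Your closing caveat that regularity ``is in force'' wherever the proposition is applied is therefore not accurate.

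The paper's actual proof of (i) avoids divisor-class arguments entirely and works for any $X\in\Sch$. Injectivity: if $\LL\pi^*(L)\otimes\cO_{\Bl}(n)$ is trivial, restricting to $\Exc$ and using (iii) gives $n=0$, and then $L\simeq\RR\pi_*\LL\pi^*L\simeq\cO_X$ because $\LL\pi^*$ is fully faithful with left inverse $\RR\pi_*$ (Thomason). Surjectivity: after twisting a line bundle $M$ on $\Bl$ by a suitable $\cO_{\Bl}(n)$ so that $\LL\tilde\iota^*M$ comes from $Z$, the filtration of $\Dperf(\Bl)$ from~\cite{CHSW08} shows $M\simeq\LL\pi^*(L)$ for some $L\in\Dperf(X)$, and one checks that $L$ is a line bundle via the evaluation map $L^\vee\otimes L\to\cO_X$ and the classification of invertible objects in $\Dperf(X)$. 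If you wish to keep an elementary argument along your lines, you must add the hypothesis that $\Bl$ is normal and $\upsilon^*$ is onto, which is strictly weaker than the statement the paper needs.
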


\begin{proof}
By Example~\ref{L_Exc_exa}, we get~(iv) and we can deduce~(iii) from
Proposition~\ref{Pic_proj_bundle_prop}. To prove~(ii), use that for
$X$ normal (resp.\ regular) $\Pic(X)$ injects into (resp.\ is
isomorphic to) the group ${\rm Cl}(X)$ of Weil divisors classes
(see~\cite[7.1.19 and 7.2.14~(c), resp.\,7.2.16]{Liu02}), and that
${\rm Cl}(X)={\rm Cl}(U)$ since $\codim_X(Z)\geq 2$. Finally,
for~(i), consider the commutative diagram
\begin{equation}
\label{perf_eq}
\vcenter{\xymatrix{ \Dperf(Z) \ar[d]_-{\LL\tilde\pi^*}
& \Dperf(X) \ar[l]_-{\LL\iota^*} \ar[d]^-{\LL\pi^*}
\\
\Dperf(\Exc)
& \Dperf(\Bl) \ar[l]_-{\LL\tilde\iota^*}
}}
\end{equation}
of induced functors on the derived categories of perfect complexes.
We will use\,:

\noindent\textit{Fact 1\,:} The tensor triangulated functors
$\LL\pi^*$ and $\LL\tilde\pi^*$ are fully faithful with left inverse
$\RR\pi_*$ and $\RR\tilde\pi_*$ respectively, see
Thomason~\cite[Lemme~2.3]{Thomason93}.

\noindent\textit{Fact 2\,:} If $M\in\Dperf(\Bl)$ is such that $\LL\tilde\iota^*(M)\simeq\LL\tilde\pi^*(N)$ for some $N\in\Dperf(Z)$, then $M\simeq\LL\pi^*(L)$ for some $L\in\Dperf(X)$, which must then be $\RR\pi_*(M)$ by Fact~1. This follows from~\cite[Prop.\,1.5]{CHSW08}. (In their notation, our assumption implies that $M$ is zero in all successive quotients $\Dperf^{i+1}(\Bl)/\Dperf^{i}(\Bl)$ hence belongs to $\Dperf^0(\Bl)$.)

\smallbreak

Hence $\Pic(X)\oplus\bbZ\to\Pic(\Bl)$ is injective\,: If $L$ is a
line bundle on~$X$ and $n\in\bbZ$ are such that $\LL\pi^*(L)\otimes
\cO_{\Bl}(n)$ is trivial then we get $n=0$ by restricting to $\Exc$
and applying~(iii), and we get
$L\simeq\RR\pi_*\LL\pi^*L\simeq\RR\pi_*\cO_{\Bl}\simeq\RR\pi_*\LL\pi^*\cO_X\simeq\cO_X$
by Fact~1. So, let us check surjectivity of
$\Pic(X)\oplus\bbZ\to\Pic(\Bl)$. Let $M$ be a line bundle on~$\Bl$.
Using~(iii) again and twisting with $\cO_{\Bl}(n)$ if necessary, we
can assume that $\LL\tilde\iota^*(M)$ is isomorphic to
$\LL\tilde\pi^*N=\tilde\pi^*N$ for some line bundle $N$ on~$Z$. By
Fact~2, there exists $L\in\Dperf(X)$ such that $\LL\pi^*(L)\simeq
M$.
It now suffices to check that this $L\in\Dperf(X)$ is a line bundle.
The natural (evaluation) map $L^\vee\otimes L\to \cO_X$ is an
isomorphism, since it is so after applying the fully faithful tensor functor $\LL\pi^*:\Dperf(X)\to\Dperf(\Bl)$. So
$L\in\Dperf(X)$ is an invertible object, hence it is the
$m^\mathrm{th}$ suspension of a line bundle for $m\in\bbZ$,
see~\cite[Prop.~6.4]{Balmer07}. Using~\eqref{perf_eq}, one checks by
restricting to~$Z$ that $m=0$, \ie $L$ is a line bundle.
\end{proof}

\bigbreak \centerline{*\ *\ *}\medbreak

We now discuss dualizing complexes and relative canonical bundles. First of all, we mention the essential uniqueness of dualizing complexes on a scheme.

\begin{lem} \label{dualizing_lem}
Let $X\in \Sch$ be a scheme admitting a dualizing complex~$K_X$.
Then\,:
\begin{itemize}
\item[(i)] For any line bundle $L$ and any integer $i$, the complex $K_X \otimes L[i]$ is also a dualizing complex and any dualizing complex on $X$ is of this form.
\item[(ii)] If $K_X \otimes L[i] \simeq K_X$ in the derived category of~$X$, for some line bundle $L$ and some integer $i$, then $L \simeq \cO_X$ and $i=0$.
\end{itemize}
In other words, the set of isomorphism classes of dualizing complexes on $X$ is a principal homogeneous space under the action of $\Pic(X) \oplus \bbZ$.
\end{lem}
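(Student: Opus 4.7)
The plan for part (i) is to handle the two assertions separately. First, that $K_X \otimes L[i]$ is dualizing follows by unwinding the definition: the functor $\RHom(-, K_X \otimes L[i])$ is naturally isomorphic to $\RHom(-, K_X) \otimes L[i]$ because $L[i]$ is flat and perfect, and composing the involutive duality $\RHom(-, K_X)$ with the triangulated auto-equivalence $- \otimes L[i]$ of $\Dbc(X)$ again yields an involutive duality. For the converse, given two dualizing complexes $K$ and $K'$, I would consider $M := \RHom(K, K')$. A standard manipulation using biduality with respect to $K$ identifies the endofunctor $M \otimes^{\mathrm{L}} -$ of $\Dbc(X)$ with the composite $D_{K'} \circ D_K$, which is an equivalence since each $D_K, D_{K'}$ is. Hence $M$ is an invertible object in $\Dbc(X)$. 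Since $X$ is connected, \cite[Prop.~6.4]{Balmer07} (already invoked in the proof of Proposition~\ref{Pic_codim2_prop}) identifies $M$ with a shifted line bundle $L[i]$, and biduality then gives $K' \simeq K \otimes L[i]$.

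For part (ii), the plan is to apply $\RHom(K_X, -)$ to the assumed isomorphism $K_X \otimes L[i] \simeq K_X$. Pulling $L[i]$ out of the second argument (using once more that it is flat and perfect) produces $\RHom(K_X, K_X) \otimes L[i] \simeq \RHom(K_X, K_X)$. The defining property of a dualizing complex gives $\RHom(K_X, K_X) \simeq \cO_X$ canonically, so the isomorphism degenerates to $L[i] \simeq \cO_X$ in $\Dbc(X)$. Comparing cohomology sheaves—$L$ is concentrated in degree zero and $\cO_X$ is nonzero on the connected, nonempty scheme $X$—forces $i = 0$, after which $L \simeq \cO_X$.

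The final claim, that isomorphism classes of dualizing complexes form a principal homogeneous space under $\Pic(X) \oplus \bbZ$, is then a formal combination: (i) gives transitivity of the action and (ii) gives freeness. The main technical hurdle is the identification of $\RHom(K, K')$ as an invertible object of $\Dbc(X)$, which requires a careful deployment of the biduality isomorphisms for both $K$ and $K'$; once this is in place the rest of the argument is mechanical, and in particular the classification of invertible objects on a connected scheme can be imported wholesale from the already-cited \cite{Balmer07}.
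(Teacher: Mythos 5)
Your part (ii) is essentially the paper's own argument: apply $\RHom(K_X,-)$ to the assumed isomorphism, pull the invertible factor $L[i]$ out of the second variable, use $\RHom(K_X,K_X)\simeq\cO_X$ (which is just biduality evaluated at $\cO_X$), and read off $i=0$ and $L\simeq\cO_X$ by comparing cohomology sheaves. That part, and the first assertion of (i), are fine. The paper itself disposes of all of (i) by citing a reference, so the only place where your proposal goes beyond the paper is the converse direction of (i), and that is where the gap sits.

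The identification of the composite $D_{K'}\circ D_K$ with $M\otimes^{\mathrm{L}}(-)$, where $M=\RHom(K,K')$, is \emph{not} a standard manipulation using biduality. Biduality gives it only on perfect complexes: for $P$ perfect one has $\RHom(P,K)\simeq P^\vee\otimes K$ and hence $\RHom(\RHom(P,K),K')\simeq P\otimes M$. A general object $A$ of $\Dbc(X)$ is not perfect, and the natural composition map $\RHom(A,K)\otimes^{\mathrm{L}}\RHom(K,K')\to\RHom(A,K')$ is not formally an isomorphism for such $A$; establishing it (equivalently, establishing that $M$ is invertible, equivalently the uniqueness statement itself) is exactly the nontrivial content here. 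The standard proofs --- \cite[V.3.1]{Hartshorne66}, or \cite[Lemma~3.9]{Neeman08_pre} which the paper cites --- reduce to noetherian local rings and use the structure of dualizing complexes there; there is no purely formal derivation from the two biduality isomorphisms. So you should either supply that local argument or simply cite the uniqueness theorem, as the paper does. Once $M$ is known to be invertible, your appeal to \cite[Prop.~6.4]{Balmer07} on the connected scheme $X$, the evaluation isomorphism $K\otimes M\isoto K'$, and the concluding ``principal homogeneous space'' remark are all in order.
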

\begin{proof}
For (i), see~\cite[Lemma~3.9]{Neeman08_pre}. Let us prove (ii). We
have the isomorphisms
$$\cO_X \isoto {\RHom}(K_X,K_X) \simeq {\RHom}(K_X,K_X \otimes L[n]) \simeq {\RHom}(K_X,K_X)\otimes L[n] {\buildrel \sim\over\leftarrow} L[n]$$
in the coherent derived category. The first and last ones hold by
\cite[Prop.~3.6]{Neeman08_pre}. We thus obtain an isomorphism $\cO_X
\simeq L[n]$ in the derived category of perfect complexes (it is a
full subcategory of the coherent one). This forces $n=0$ and the
existence of an honest isomorphism of sheaves $\cO_X \simeq L$,
see~\cite[Prop.~6.4]{Balmer07} if necessary.
\end{proof}

We now use the notion of local complete intersection (l.c.i.)
morphism, that is, a morphism which is locally a regular embedding
followed by a smooth morphism, see~\cite[\S~6.3.2]{Liu02}. The
advantage of such morphisms $f:X'\to X$ is that $f^!$ is just $\LL
f^*$ twisted by a line bundle $\can_f$ and shifted by the relative
dimension~$\Kdim{f}$.

\begin{prop}
\label{f^!_prop}%
Let $f:X'\to X$ be an l.c.i.\ morphism with $X,X'\in\Sch$. Assume
that $f$ is proper. Then $f^!(\cO_X)$ is a shifted line bundle
$\can_f[\Kdim{f}]$ and there exists a natural isomorphism
$f^!(\cO_X) \otimes \LL f^*(-) \isoto f^!(-)$. In particular, $f^!$ preserves the subcategory $\Dperf$ of $\Dbc$.
\end{prop}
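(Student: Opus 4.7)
My plan is to reduce to the two local models of an l.c.i.\ morphism and then glue. By definition \cite[\S6.3.2]{Liu02}, locally on $X'$ the morphism $f$ factors as $f = p \circ i$ with $i:X' \hookrightarrow W$ a regular closed immersion of some codimension $c$ and $p:W\to X$ smooth of some relative dimension $d$, so that $\Kdim{f} = d - c$. By functoriality of the upper shriek, $f^! = i^! \circ p^!$, so it suffices to establish the claim for each factor.

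For the smooth factor $p$, the standard computation (Grothendieck--Verdier duality for smooth morphisms) gives $p^!(-) \cong \LL p^*(-) \otimes \omega_p[d]$, where $\omega_p = \det(\Omega^1_{W/X})$ is the relative canonical line bundle. For the regular embedding $i$, one has the analogous formula $i^!(-) \cong \LL i^*(-) \otimes \det(N_{X'/W})[-c]$, where $N_{X'/W}$ is the rank-$c$ normal bundle. In particular, in both cases $(\,\cdot\,)^!(\cO)$ is a shifted line bundle and the upper shriek is naturally isomorphic to $\LL (\cdot)^*(-)$ twisted by that shifted line bundle. Composing the two formulas yields, locally,
\[
f^!(-) \;\cong\; \LL i^*\bigl(\LL p^*(-) \otimes \omega_p[d]\bigr) \otimes \det(N_{X'/W})[-c]
\;\cong\; \LL f^*(-) \otimes \bigl(\LL i^*\omega_p \otimes \det(N_{X'/W})\bigr)[\Kdim{f}],
\]
so $f^!(\cO_X)$ is locally of the form $\can_f[\Kdim{f}]$ for the line bundle $\can_f := \LL i^*\omega_p \otimes \det(N_{X'/W})$, and $f^!(-) \cong \LL f^*(-) \otimes f^!(\cO_X)$ locally and functorially.

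To globalize, I would observe that the functor $f^!$ is intrinsic (independent of the chosen factorization), so the locally defined line bundles $\can_f$ automatically patch to a global line bundle on $X'$; equivalently, the object $f^!(\cO_X) \in \Dbc(X')$ is concentrated in the single cohomological degree $-\Kdim{f}$ and is locally free of rank one there, and one defines $\can_f := \mathcal H^{-\Kdim{f}}(f^!(\cO_X))$. The natural isomorphism $f^!(\cO_X)\otimes \LL f^*(-) \isoto f^!(-)$ is then the projection formula for $f^!$ along a perfect proper morphism (l.c.i.\ implies perfect, see \cite[p.\,250]{SGA6}); it holds locally by the smooth/regular-embedding computation above, and being a natural transformation between two exact functors that agree locally, it is a global isomorphism.

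The remaining assertion that $f^!$ preserves $\Dperf$ is then immediate from the formula: if $K \in \Dperf(X)$, then $\LL f^*(K) \in \Dperf(X')$ because $\LL f^*$ always preserves perfection, and tensoring with the shifted line bundle $\can_f[\Kdim{f}] \in \Dperf(X')$ preserves $\Dperf$. I expect the main technical obstacle to be a clean verification that the local $\can_f$'s glue independently of the factorization $f = p\circ i$, i.e.\ the check that the natural isomorphism above does not depend on the chosen smooth envelope; this comes down to comparing two factorizations through a common refinement and using the standard short exact sequence relating normal and cotangent bundles, and could alternatively be bypassed by invoking Neeman's intrinsic construction of $f^!$ as recalled in \cite{Neeman08_pre}.
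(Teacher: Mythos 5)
Your proposal is correct and follows essentially the same route as the paper: both arguments start from the globally defined natural morphism $f^!(\cO_X)\otimes \LL f^*(-)\to f^!(-)$, reduce to a local factorization of the l.c.i.\ morphism into a regular closed immersion followed by a smooth morphism, invoke the classical formulas for $i^!$ and $p^!$ (Hartshorne, \emph{Residues and duality}, Ch.~III), and conclude by stability under composition; the preservation of $\Dperf$ then follows formally. Your concern about independence of the factorization is handled exactly as you suggest at the end, since being an isomorphism is a local property of the intrinsic global map.
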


\begin{proof}
There is always a
natural morphism $f^!(\cO_X) \otimes \LL f^*(-) \to f^!(-)$. One shows
that it is an isomorphism and that $f^!(\cO_X)$ is a line bundle
directly from the definition, since both these facts can be checked
locally, are stable by composition and are true for (closed) regular
immersions and smooth morphisms by Hartshorne~\cite[Ch.~III]{Hartshorne66}. The subcategory $\Dperf$ is then preserved since both $\LL f^*$ and tensoring by a line bundle preserve it.
\end{proof}

The above proposition reduces the description of $f^!$ to that of
the line bundle~$\can_f$.

\begin{prop}
\label{concrete_can_prop}%
In the following cases, we have concrete descriptions of $\can_f$.
\begin{enumerate}
\item[(i)]
When $f:X'\to X$ is smooth and proper,
$\can_f\simeq\det(\Omega^1_{X'/X})$ is the determinant of the sheaf
of differentials. In particular, when $f$ is the projection of a
projective bundle $\bbP(\cF)$ to its base, where $\cF$ is a vector
bundle of rank~$r$, then $\can_f \simeq f^*({\rm det}\cF)\otimes
\cO_{\bbP(\cF)}(-r)$.
\smallbreak
\item[(ii)]
When $f:X'\hookrightarrow X$ is a regular closed immersion, $\can_f \simeq {\rm det} (N_{X'/X})$ is the determinant of the normal bundle. In particular when $f:E\hookrightarrow B$ is the inclusion of a prime divisor (Def.\,\ref{O(E)_def}), we have $\can_f \simeq \Line(E)\restr{E}$.
\end{enumerate}
\end{prop}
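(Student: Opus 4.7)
Both statements are classical consequences of Grothendieck duality, and the plan is to reduce them to results of Hartshorne~\cite[Ch.~III]{Hartshorne66}, then compute the line bundle explicitly in the two special subcases.

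For (i), the approach is to start from the fact that for a smooth proper morphism $f:X'\to X$ of relative dimension~$d=\Kdim{f}$, one has a canonical isomorphism $f^!(\cO_X)\simeq \omega_{X'/X}[d]$ where $\omega_{X'/X}=\det(\Omega^1_{X'/X})$; this is \cite[Ch.~III, \S\,2]{Hartshorne66}. Comparing with the defining formula $f^!(\cO_X)\simeq \can_f[\Kdim{f}]$ of Proposition~\ref{f^!_prop} yields $\can_f\simeq\det(\Omega^1_{X'/X})$. For the projective bundle $f:\bbP(\cF)\to X$ with $\cF$ of rank~$r$, I would then invoke the relative Euler exact sequence
\begin{equation*}
0\too \Omega^1_{\bbP(\cF)/X}\too f^*(\cF)\otimes \cO_{\bbP(\cF)}(-1)\too \cO_{\bbP(\cF)}\too 0
\end{equation*}
and take determinants; this gives $\det(\Omega^1_{\bbP(\cF)/X})\simeq f^*(\det\cF)\otimes \cO_{\bbP(\cF)}(-r)$, as wanted.

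For (ii), the plan is dual: for a regular closed immersion $f:X'\hookrightarrow X$ of codimension~$c$, the fundamental local isomorphism of \cite[Ch.~III, \S\,7]{Hartshorne66} gives $f^!(\cO_X)\simeq\det(N_{X'/X})[-c]$, where $N_{X'/X}=(I_{X'}/I_{X'}^2)^\vee$ is the normal bundle. Since $\Kdim{f}=-c$, comparing again with $f^!(\cO_X)\simeq\can_f[\Kdim{f}]$ in Proposition~\ref{f^!_prop} identifies $\can_f$ with $\det(N_{X'/X})$. For the prime divisor case $f=\tilde\iota:E\hookrightarrow B$, the normal bundle has rank one, so its determinant is itself, and by the description in Definition~\ref{O(E)_def} we have $N_{E/B}\simeq \Line(E)\restr{E}$, giving the claim.

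The only real subtlety is being consistent with the sign and shift conventions used in Proposition~\ref{f^!_prop}, where $\can_f$ is defined by the equation $f^!(\cO_X)=\can_f[\Kdim{f}]$ (with $\Kdim{f}$ positive for smooth morphisms and negative for immersions). Once those conventions are matched to Hartshorne's, the arguments above are essentially bookkeeping; the Euler sequence computation in~(i) is the only non-formal input, and it is standard.
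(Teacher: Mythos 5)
Your argument is correct: the paper itself disposes of this proposition with a citation to Verdier~\cite[Prop.\,1 and Thm.\,3]{Verdier69} (alternatively Liu~\cite[\S\,6.4.2]{Liu02}), and what you write out --- $f^!(\cO_X)\simeq\det(\Omega^1_{X'/X})[d]$ for smooth proper $f$, the fundamental local isomorphism $f^!(\cO_X)\simeq\det(N_{X'/X})[-c]$ for a regular immersion, the relative Euler sequence for the projective bundle, and $N_{E/B}\simeq\Line(E)\restr{E}$ from Definition~\ref{O(E)_def} --- is exactly the standard content of those references, with the shift conventions matching $f^!(\cO_X)=\can_f[\Kdim f]$ as you note. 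Your Euler sequence also uses the Grothendieck convention $\bbP(\cF)=\Proj(\mathrm{Sym}\,\cF)$, which is the one the paper uses (cf.\ Example~\ref{L_Exc_exa}), so the formula $f^*(\det\cF)\otimes\cO_{\bbP(\cF)}(-r)$ comes out with the correct twist.
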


\begin{proof}
See~\cite[Prop.\,1 and Thm.\,3]{Verdier69}. See
alternatively~\cite[\S\,6.4.2]{Liu02}.
\end{proof}

\goodbreak

\begin{rem}
\label{lci_rem}%
All morphisms along which we consider push-forward in this article
are l.c.i.  It might not be obvious for $\pi: \Bl \to
X$ but this follows from~\cite[VII 1.8 p.~424]{SGA6} (it is locally of
the form mentioned there). So, $\can_\pi$ is also a line
bundle. Let us now describe the relative canonical line bundles in terms of $\can_\iota= {\rm det}(N_{Z/X})$.
\end{rem}

\begin{prop}
\label{can_prop}%
With the notation of Setup~\ref{setup}, we have
\begin{enumerate}
\smallbreak
\item[(i)]
\  $\can_{\tilde{\iota}}=\Line(\Exc)\restr{\Exc}=\cO_{\Exc}(-1)$
\smallbreak
\item[(ii)]
\  $\can_{\tilde{\pi}}=\tilde\pi^* \can_\iota^{\vee} \otimes
\Line(\Exc)\restr{\Exc}^{\otimes c}=\tilde\pi^* \can_\iota^{\vee}
\otimes \cO_{\Exc}(-c)$
\smallbreak
\item[(iii)]
\  $\can_{\pi}=\Line(\Exc)^{\otimes (c-1)}=\cO_{\Bl}(1-c).$
\end{enumerate}
By Proposition~\ref{f^!_prop}, it implies that we have
\begin{enumerate}
\item[(iv)] \  $\tilde\iota^!(-)=\Line(\Exc)\restr{\Exc} \otimes \LL\tilde\iota^*(-)[-1]=\cO_{\Exc}(-1) \otimes \LL\tilde\iota^*(-)[-1]$
\item[(v)] \  $\tilde\pi^!(-)=\tilde\pi^* \can_\iota^{\vee} \otimes \Line(\Exc)\restr{\Exc}^{\otimes c} \otimes \LL\tilde\pi^*(-)[c-1]=\tilde\pi^* \can_\iota^{\vee} \otimes \cO_{\Exc}(-c) \otimes \LL\tilde\pi^*(-)[c-1]$
\item[(vi)] \  $\pi^!(-)=\Line(\Exc)^{\otimes (c-1)} \otimes \LL \pi^*(-)=\cO_{\Bl}(1-c) \otimes \LL \pi^*(-).$
\end{enumerate}
\end{prop}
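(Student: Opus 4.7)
The plan is to dispatch parts~(i) and~(ii) by direct appeal to the concrete formulas for $f^!$ in Proposition~\ref{concrete_can_prop}, then bootstrap to~(iii) via the commutative square $\pi\tilde\iota=\iota\tilde\pi$. Parts~(iv)--(vi) will follow mechanically from~(i)--(iii) by Proposition~\ref{f^!_prop}, using the relative dimensions $\Kdim{\tilde\iota}=-1$, $\Kdim{\tilde\pi}=c-1$, and $\Kdim{\pi}=0$.

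For~(i), $\tilde\iota:\Exc\hookrightarrow \Bl$ is a prime divisor by construction, so Proposition~\ref{concrete_can_prop}(ii) identifies $\can_{\tilde\iota}$ with the line bundle $N_{\Exc/\Bl}$, which is $\Line(\Exc)\restr{\Exc}\cong \cO_{\Exc}(-1)$ by Definition~\ref{O(E)_def} and Example~\ref{L_Exc_exa}. For~(ii), Example~\ref{L_Exc_exa} realizes $\tilde\pi:\Exc\to Z$ as the projection from the projective bundle $\bbP_Z(C_{Z/X})$ of rank~$c$. Proposition~\ref{concrete_can_prop}(i) then yields $\can_{\tilde\pi}\cong \tilde\pi^*(\det C_{Z/X})\otimes \cO_{\Exc}(-c)$, and since $C_{Z/X}\cong N_{Z/X}^\vee$, another application of Proposition~\ref{concrete_can_prop}(ii)---this time to~$\iota$---identifies $\det C_{Z/X}$ with $\can_\iota^\vee$. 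Combined with~(i), this gives the claimed formula.

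For~(iii), I would exploit the identity $(\pi\tilde\iota)^!\cO_X\cong(\iota\tilde\pi)^!\cO_X$. Using Proposition~\ref{f^!_prop} together with~(i), the left-hand side expands as $\tilde\iota^!\can_\pi\cong \Line(\Exc)\restr{\Exc}\otimes \tilde\iota^*\can_\pi\,[-1]$. Using Proposition~\ref{f^!_prop} together with~(ii), the right-hand side expands as $\tilde\pi^!(\can_\iota[-c])\cong \can_{\tilde\pi}\otimes \tilde\pi^*\can_\iota\,[-1]\cong \Line(\Exc)\restr{\Exc}^{\otimes c}[-1]$, where the last $\cong$ comes from cancelling $\tilde\pi^*\can_\iota^\vee$ against $\tilde\pi^*\can_\iota$. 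Cancelling the common factor $\Line(\Exc)\restr{\Exc}[-1]$ yields the pullback relation $\tilde\iota^*\can_\pi\cong \Line(\Exc)\restr{\Exc}^{\otimes(c-1)}$. To globalize this to an identity on $\Bl$, I invoke the decomposition $\Pic(\Bl)\cong \pi^*\Pic(X)\oplus \bbZ\langle\Line(\Exc)\rangle$ from Proposition~\ref{Pic_codim2_prop}(i): writing $\can_\pi\cong\pi^*L\otimes \Line(\Exc)^{\otimes n}$ and reading the previous identity in $\Pic(\Exc)\cong \tilde\pi^*\Pic(Z)\oplus \bbZ\langle\cO_{\Exc}(-1)\rangle$ immediately forces $n=c-1$ and $\iota^*L\cong \cO_Z$.

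The main obstacle is to promote $\iota^*L\cong \cO_Z$ to the full statement $L\cong \cO_X$. For this, I would restrict $\can_\pi$ to $U$: since $\pi$ is an isomorphism over $U$ we have $\can_\pi\restr{U}\cong \cO_U$, and combined with $\Line(\Exc)\restr{U}\cong \cO_U$ this forces $\upsilon^*L\cong \cO_U$. Injectivity of $\upsilon^*:\Pic(X)\to \Pic(U)$---which holds for instance when $X$ is normal by Proposition~\ref{Pic_codim2_prop}(ii), a mild hypothesis present in all intended applications---then yields $L\cong \cO_X$ and completes~(iii). Parts~(iv)--(vi) are finally direct specializations of Proposition~\ref{f^!_prop} applied to the line bundles and relative dimensions computed above.
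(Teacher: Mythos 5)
Parts~(i), (ii), (iv) and~(v), and the deduction of~(vi) from~(iii), are exactly as in the paper: direct applications of Proposition~\ref{concrete_can_prop} and Proposition~\ref{f^!_prop}. Your computation of $\tilde\iota^*\can_\pi\cong\Line(\Exc)\restr{\Exc}^{\otimes(c-1)}$ via $\tilde\iota^!\pi^!\cO_X\cong\tilde\pi^!\iota^!\cO_X$ is also correct (and is essentially the same restriction-to-$\Exc$ computation that appears inside the paper's proof of~(iii)). The problem is the last step. Writing $\can_\pi\cong\pi^*L\otimes\Line(\Exc)^{\otimes(c-1)}$, you pin down $L$ only up to the kernels of $\iota^*$ and $\upsilon^*$ on $\Pic(X)$, and to conclude $L\cong\cO_X$ you must assume $\upsilon^*:\Pic(X)\to\Pic(U)$ injective, i.e.\ (via Proposition~\ref{Pic_codim2_prop}~(ii)) that $X$ is normal. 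This hypothesis is not part of Setup~\ref{setup}, and it is not harmless here: Proposition~\ref{can_prop}~(vi) is used in Section~\ref{sing_sec} (in Proposition~\ref{dichotomygeneral_prop} and in the identification~\eqref{isos1_eq} underlying Main Lemma~\ref{main_lem}) precisely in the non-regular setting where $X$ is only assumed noetherian with a dualizing complex. So as written your argument proves a weaker statement than the one claimed.

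The paper closes exactly this gap by computing the $\Pic(X)$-component directly instead of appealing to injectivity. Concretely: from the triangle $\cO_{\Bl}(l+1)\to\cO_{\Bl}(l)\to\RR\tilde\iota_*\cO_{\Exc}(l)\to\cO_{\Bl}(l+1)[1]$ coming from~\eqref{O(E)_eq} and the vanishing $\RR\iota_*\RR\tilde\pi_*\cO_{\Exc}(l)=0$ for $-c<l<0$ (\cite[2.1.15]{EGA3-1}), one gets $\RR\pi_*\cO_{\Bl}(l)\cong\cO_X$ for $-c<l\leq0$. Your restriction computation shows $\LL\tilde\iota^*\big(\pi^!\cO_X\otimes\cO_{\Bl}(c-1)\big)\cong\cO_{\Exc}$, so by the filtration argument of~\cite[Prop.\,1.5]{CHSW08} (Fact~2 in the proof of Proposition~\ref{Pic_codim2_prop}) one has $\pi^!\cO_X\otimes\cO_{\Bl}(c-1)\cong\LL\pi^*M$ with $M=\RR\pi_*\big(\pi^!\cO_X\otimes\cO_{\Bl}(c-1)\big)$, and Grothendieck duality gives $M\cong\RHom(\RR\pi_*\cO_{\Bl}(1-c),\cO_X)\cong\RHom(\cO_X,\cO_X)\cong\cO_X$, with no normality needed. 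If you replace your appeal to $\Pic(X)\hookrightarrow\Pic(U)$ by this push-forward/duality computation, your proof becomes complete in the stated generality.
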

\begin{proof}
Points~(i) and~(ii) follow from Proposition~\ref{concrete_can_prop}~(ii) and~(i), respectively. They imply (iv) and (v). To prove point (iii) let us first observe that the exact sequence $\eqref{O(E)_eq}$ gives rise to an exact triangle
$$
\cO_{\Bl}(l+1) \to \cO_{\Bl}(l) \to \RR\tilde\iota_*(\cO_{\Exc}(l)) \to \cO_{\Bl}(l+1)[1]
$$
in $\Dperf(\Bl)$ for any $l \in \bbZ$. Applying $\RR \pi_*$ to this triangle and using that
$$
\RR \pi_* \RR \tilde\iota_*\cO_{\Exc}(l) = \RR \iota_* \RR \tilde\pi_*\cO_{\Exc}(l) = 0\quad \text{for} \quad -c< l<0
$$
(by~\cite[2.1.15]{EGA3-1}), we obtain by induction that $\RR \pi_*
\cO_{\Bl}(l)=\RR\pi_*\cO_{\Bl}= \cO_X$ for $-c<l\leq 0$. In
particular $\RR \pi_*\cO_{\Bl}(1-c)=\cO_X$. We now use the
filtration of~\cite[Prop.\,1.5]{CHSW08}. Let us show that
$\pi^!(\cO_X)\otimes \cO_{\Bl}(c-1)$ is in $\Dperf^0(\Bl)$. By
\loccit\ it suffices to show that $\LL
\tilde\iota^*(\pi^!(\cO_X)\otimes \cO_{\Bl}(c-1))$ is in
$\Dperf^0(\Exc)$. It follows from the sequence of isomorphisms
\begin{align*}
& \LL \tilde\iota^*(\pi^!(\cO_X)\otimes \cO_{\Bl}(c-1)) \simeq \LL \tilde\iota^* \pi^!(\cO_X) \otimes \cO_{\Exc}(c-1) \equalby{\simeq}{\text{(iv)}} \tilde\iota^! \pi^!(\cO_X) \otimes \cO_E(c)[1] \\
& \simeq \tilde\pi^! \iota^!(\cO_X) \otimes \cO_{\Exc}(c)[1] \equalby{\simeq}{\text{\ref{f^!_prop}}} \tilde\pi^!(\can_\iota) \otimes \cO_{\Exc}(c)[-c+1] \equalby{\simeq}{\text{(v)}} \cO_{\Exc}\,.
\end{align*}
Since $L:=\pi^!(\cO_X)\otimes \cO_{\Bl}(c-1)$ is in $\Dperf^0(\Bl)$, it is of the form $\LL \pi^* M$ for $M=\RR \pi_* L$ (by Fact~1 in the proof of Prop.~\ref{Pic_codim2_prop}) which we compute by duality:
\begin{align*}
& \RR \pi_* (\pi^!(\cO_X) \otimes \cO_{\Bl}(c-1)) \simeq \RR \pi_* \RHom(\cO_{\Bl},\pi^!(\cO_X)\otimes \cO_{\Bl}(c-1)) \simeq \\
& \simeq \RR \pi_* \RHom(\cO_{\Bl}(1-c), \pi^!(\cO_X)) \equalby{\simeq}{(\dagger)} \RHom(\RR \pi_* \cO_{\Bl}(1-c),\cO_X) \equalby{\simeq}{(\star)} \\
& \equalby{\simeq}{\mathstrut} \RHom(\cO_X,\cO_X) \simeq \cO_X
\end{align*}
where ($\star$) is by the computation at the beginning of the proof,
($\dagger$) is the duality isomorphism and all other isomorphisms
are obtained as consequences of the monoidal structure on the $\Dbc$
involved (see~\cite{Calmes09} and~\cite{Calmes08b_pre} for details).
Hence, $\pi^! (O_X) \simeq \cO_{\Bl}(1-c)$ as announced. This proves
(iii) and thus (vi).
\end{proof}

Finally, we also use dualizing complexes in the context of an
$\bbA^{\!*}$-bundle $U \to Y$, \ie a morphism that is locally of
the form $\bbA^n_Y \to Y$ (and is in particular flat).

\begin{lem} \label{affinebundle_lem}
Let $\alpha: U \to Y$ be an $\bbA^{\!*}$-bundle. Assume that $Y$
admits a dualizing complex~$K_Y$. Then
$\LL\alpha^*(K_Y)=\alpha^*(K_Y)$ is a dualizing complex on~$U$.
\end{lem}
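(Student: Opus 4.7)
The plan is to exploit the fact that being a dualizing complex is a Zariski-local property on $U$, combined with the local triviality of the $\bbA^{\!*}$-bundle $\alpha$. First, I would note that $\alpha$ is flat (being locally a projection from $\bbA^r$), so $\LL\alpha^*=\alpha^*$; this disposes of the equality in the statement. Second, by Hypothesis~\ref{basic_hypo}, $Y$ admits a Zariski cover by opens $V$ over which $\alpha$ restricts to a trivial projection $p:\bbA^r_V\to V$ (with $r$ depending on $V$). Since restriction of a dualizing complex along an open immersion is dualizing (Remark~\ref{Wcoh_rem}), the restriction $K_V:=K_Y|_V$ is dualizing, and $\alpha^*K_Y$ restricts to $p^*K_V$ over $\alpha^{-1}(V)$. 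It therefore suffices to prove that $p^*K_V$ is a dualizing complex on $\bbA^r_V$ for each such $V$.

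To handle this local situation, I would compactify: factor $p=\bar p\circ j$ where $j:\bbA^r_V\hookrightarrow \bbP^r_V$ is the inclusion as the complement of the hyperplane at infinity, and $\bar p:\bbP^r_V\to V$ is the projection, which is proper, smooth and l.c.i. By Remark~\ref{push_rem}, $\bar p^!K_V$ is a dualizing complex on $\bbP^r_V$. Applying Proposition~\ref{f^!_prop} together with Proposition~\ref{concrete_can_prop}(i) to the projective bundle $\bbP^r_V=\bbP_V(\cO_V^{r+1})$ (whose relative canonical bundle is $\cO_{\bbP^r_V}(-r-1)$ since $\det \cO_V^{r+1}=\cO_V$) yields the explicit description
$$\bar p^!K_V \;\simeq\; \cO_{\bbP^r_V}(-r-1)\otimes \bar p^*K_V \,[r].$$

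Finally, I would restrict back to $\bbA^r_V$ along the open immersion $j$. Using that the line bundle $\cO_{\bbP^r_V}(-r-1)$ trivializes on $\bbA^r_V$ (it corresponds to the hyperplane at infinity, which has been removed), and that open restriction preserves dualizing complexes (Remark~\ref{Wcoh_rem}), one gets $j^*\bar p^!K_V \simeq p^*K_V[r]$ dualizing on $\bbA^r_V$. By Lemma~\ref{dualizing_lem}(i), the shift is harmless, so $p^*K_V$ is dualizing as desired. The only potential obstacle is to invoke carefully the locality of the dualizing property and the triviality of $\cO(-r-1)$ on the affine chart; both are standard and require no genuine work, just precise citations.
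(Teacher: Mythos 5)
Your proposal is correct and follows essentially the same route as the paper: reduce to the locally trivial case, factor the projection $\bbA^r_V\to V$ through $\bbP^r_V$, use that $\bar p^!$ preserves dualizing complexes together with the computation $\bar p^!K_V\simeq\cO_{\bbP^r_V}(-r-1)\otimes\bar p^*K_V[r]$ from Propositions~\ref{f^!_prop} and~\ref{concrete_can_prop}, and conclude via triviality of $\cO(-r-1)$ on the affine chart and invariance of the dualizing property under open restriction and shift. The only difference is that you spell out the Zariski-local reduction in more detail than the paper, which simply asserts it.
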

\begin{proof}
This can be checked locally, so we can assume $\alpha$ decomposes as
$\alpha=f\circ u$ for an open immersion $u:\bbA^n_Y \hookrightarrow
\bbP^n_Y$ followed by the structural projection $f:\bbP^n_Y \to Y$.
Note that $\alpha$, $u$ and $f$ are all flat. We have by Propositions~\ref{f^!_prop} and~\ref{concrete_can_prop}\,(i)
$$
u^* f^! K_Y[n] = u^* (\cO(-n-1) \otimes f^* K_Y) \simeq u^* f^* K_Y = \alpha^* K_Y
$$
where the second equality comes from the triviality of $\cO(-n-1)$ on
$\bbA^n_Y$. Now $u^* f^! K_Y[n]$ is dualizing because proper
morphisms, open immersions and shifting preserve dualizing
complexes.
\end{proof}


\end{appendix}


\bigbreak\goodbreak
\noindent\textbf{Acknowledgments\,:} We thank Marc Levine for precious discussions on Theorem~\ref{standard_thm} and Example~\ref{Grass_exa}, Jean Fasel for a florilegium of relative bundles and Burt Totaro for several discussions on algebraic geometry.
\goodbreak


\providecommand{\bysame}{\leavevmode\hbox
to3em{\hrulefill}\thinspace}


\begin{thebibliography}{10}

\bibitem{SGA6}
\emph{Th\'eorie des intersections et th\'eor\`eme de
{R}iemann-{R}och},
  S\'eminaire de G\'eom\'etrie Alg\'ebrique du
  Bois-Marie 1966--1967 (SGA 6), dirig\'e par P.~Berthelot, A.~Grothendieck et
  L.~Illusie. Lecture Notes in Mathematics, Vol. 225. Springer-Verlag, Berlin, 1971.

\bibitem{Arason80}
J.~Arason, \emph{Der {W}ittring projektiver {R}\"aume},
Math. Ann.
  \textbf{253} (1980), no.~3, 205--212.

\bibitem{Balmer00}
P.~Balmer, \emph{{T}riangular {W}itt {G}roups {P}art 1: The
12-{T}erm
  {L}ocalization {E}xact {S}equence}, K-Theory \textbf{4} (2000), no.~19,
  311--363.

\bibitem{Balmer05b}
\bysame, \emph{Products of degenerate quadratic forms}, Compos.
Math.
  \textbf{141} (2005), no.~6, 1374--1404.

\bibitem{Balmer05a}
\bysame, \emph{Witt groups}, Handbook of $K$-theory. Vol. 2,
Springer, Berlin,
  2005, pp.~539--576.

\bibitem{BalmerCalmes08pp2}
P.~Balmer and B.~Calm\`es, \emph{Witt groups of {G}rassmann
varieties},
  preprint, 2008.

\bibitem{Balmer07}
P.~Balmer and G.~Favi, \emph{{G}luing techniques in triangular
geometry}, Q J
  Math \textbf{58} (2007), no.~4, 415--441.

\bibitem{Calmes08b_pre}
B.~Calm\`es and J.~Hornbostel, \emph{{P}ush-forwards for {W}itt
groups of
  schemes}, arXiv:0806.0571,
  2008, to appear in Comment. Math. Helv.

\bibitem{Calmes06_pre}
\bysame, \emph{Witt motives, transfers and d{\'e}vissage}, preprint,
available
  at \url{http://www.math.uiuc.edu/K-theory/0786/}, 2006.

\bibitem{Calmes09}
B.~Calmes and J.~Hornbostel, \emph{{T}ensor-triangulated categories
and
  dualities}, Theory Appl. Categ. \textbf{22} (2009), No. 6, 136--198.

\bibitem{CHSW08}
G.~Corti\~{n}as, C.~Haesemeyer, M.~Schlichting, and C.~Weibel,
\emph{Cyclic
  homology, cdh-cohomology and negative {K}-theory}, Ann. of Math. \textbf{167}
  (2008), 549--573.

\bibitem{Gille03b}
S.~Gille, \emph{Homotopy invariance of coherent {W}itt groups},
Math. Z.
  \textbf{244} (2003), no.~2, 211--233.

\bibitem{EGA3-1}
A.~Grothendieck, \emph{\'{E}l\'ements de g\'eom\'etrie alg\'ebrique.
{III}.
  \'{E}tude cohomologique des faisceaux coh\'erents. {I}}, Inst. Hautes
  \'Etudes Sci. Publ. Math. (1961), no.~11, 167.

\bibitem{Hartshorne66}
R.~Hartshorne, \emph{Residues and duality}, Lecture notes of a
seminar on the
  work of A. Grothendieck, given at Harvard 1963/64. With an appendix by P.
  Deligne. Lecture Notes in Mathematics, No. 20, Springer-Verlag, Berlin, 1966.

\bibitem{Hartshorne77}
\bysame, \emph{Algebraic geometry}, Springer-Verlag, New York, 1977,
Graduate
  Texts in Mathematics, No. 52.

\bibitem{Laksov72}
D.~Laksov, \emph{{A}lgebraic {C}ycles on {G}rassmann {V}arieties},
Adv. in
  Math. \textbf{9} (1972), no.~3, 267--295.

\bibitem{LevineMorel07}
M.~Levine and F.~Morel, \emph{Algebraic cobordism}, Springer
Monographs in
  Mathematics, Springer, Berlin, 2007.

\bibitem{Liu02}
Q.~Liu, \emph{Algebraic geometry and arithmetic curves}, Oxford
Graduate Texts
  in Mathematics, vol.~6, Oxford University Press, Oxford, 2002.

\bibitem{Neeman08_pre}
A.~Neeman, \emph{{D}erived categories and {G}rothendieck duality},
preprint 791
  of \url{http://www.crm.es/Publications/Preprints08.htm}.

\bibitem{Nenashev09}
A.~Nenashev, \emph{Projective push-forwards in the {W}itt theory of
algebraic
  varieties}, Adv. Math. \textbf{220} (2009), no.~6, 1923--1944.

\bibitem{Nenashev06}
A.~Nenashev and K.~Zainoulline, \emph{Oriented cohomology and
motivic
  decompositions of relative cellular spaces}, J. Pure Appl. Algebra
  \textbf{205} (2006), no.~2, 323--340.

\bibitem{Panin04}
I.~Panin, \emph{Riemann-{R}och theorems for oriented cohomology},
Axiomatic,
  enriched and motivic homotopy theory, NATO Sci. Ser. II Math. Phys. Chem.,
  vol. 131, Kluwer Acad. Publ., Dordrecht, 2004, pp.~261--333.

\bibitem{Quillen73}
D.~Quillen, \emph{Higher algebraic {$K$}-theory: {I}}, Algebraic
{$K$}-theory,
  Lecture Notes in Math., no. 341, Springer-Verlag, 1973, pp.~83--147.

\bibitem{Thomason93}
R.~W. Thomason, \emph{Les {$K$}-groupes d'un sch\'ema \'eclat\'e et
une formule
  d'intersection exc\'edentaire}, Invent. Math. \textbf{112} (1993), no.~1,
  195--215.

\bibitem{Verdier69}
J.-L. Verdier, \emph{Base change for twisted inverse image of
coherent
  sheaves}, Algebraic Geometry (Internat. Colloq., TIFR, Bombay, 1968), Oxford
  Univ. Press, London, 1969, pp.~393--408.

\bibitem{Walter03}
C.~Walter, \emph{{G}rothendieck-{W}itt groups of projective
bundles}, preprint,
  2003.

\end{thebibliography}
\end{document}